\newtheorem{thm}{Theorem}[section]
\newtheorem{prop}[thm]{Proposition}
\newtheorem{lemma}[thm]{Lemma}
\newtheorem{cor}[thm]{Corollary}
\newtheorem{rem}[thm]{Remark}
\theoremstyle{definition}
\newtheorem{definition}[thm]{Definition}
\numberwithin{equation}{section}
\newcommand{\N}{\mathbb{N}}
\newcommand{\R}{\mathbb{R}}
\newcommand{\lk}{\left(}
\newcommand{\rk}{\right)}
\newcommand{\na}{\nabla}
\newcommand{\pa}{\partial}
\newcommand{\Hdual}{(H^{1})'}
\newcommand{\Hdualm}{(H^{1}_\diamond)'}
\begin{document}

\title{Delayed Blow-Up for Chemotaxis Models with Local Sensing}
\author{Martin Burger\thanks{Department Mathematik, Friedrich-Alexander Universit\"at Erlangen-N\"urnberg, Cauerstr. 11, 91058 Erlangen, Germany. email: martin.burger@fau.de} \and Philippe Lauren\c cot\thanks{Institut de Math\'ematiques de Toulouse, UMR 5219, Universit\'e de Toulouse, CNRS, 31062 Toulouse Cedex~9, France. emails: philippe.laurencot@math.univ-toulouse.fr, ariane.trescases@math.univ-toulouse.fr}
\and Ariane Trescases\footnotemark[\value{footnote}]}
\date{}
\maketitle

\begin{abstract}
The aim of this paper is to analyze a model for chemotaxis based on a local sensing mechanism instead of the gradient sensing mechanism used in the celebrated minimal Keller-Segel model. The model we study has the same entropy as the minimal Keller-Segel model, but a different dynamics to minimize this entropy. Consequently, the conditions on the mass for the existence of stationary solutions or blow-up are the same, however we make the interesting observation that with the local sensing mechanism the blow-up in the case of supercritical mass is delayed to infinite time. 

Our observation is made rigorous from a mathematical point via a proof of global existence of weak solutions for arbitrary large masses and space dimension. The key difference of our model to the minimal Keller-Segel model is that the structure of the equation allows for a duality estimate that implies a bound on the $\Hdual$-norm of the solutions, which can only grow with a square-root law in time. This additional $\Hdual$-bound implies a lower bound on the entropy, which contrasts markedly with the minimal Keller-Segel model for which it is unbounded from below in the supercritical case. 

Besides, regularity and uniqueness of solutions are also studied.

{\bf Keywords: } Chemotaxis Models, Blow-Up, Nonlinear Cross-Diffusion Systems, Degenerate Parabolic Equations
\end{abstract}


\section{Introduction}

The purpose of this paper is to study the well-posedness and some qualitative properties of the following system arising in the modeling of chemotaxis:
\begin{subequations}\label{sys:localsensing}
\begin{align}
\partial_t u &= \Delta (\gamma(v) u), & \text{in } (0,\infty)\times \Omega, \label{PLe1}\\
\partial_t v &= \epsilon \Delta v - \beta v + u, &\text{in } (0,\infty)\times \Omega,\label{PLe2}
\end{align}
on a bounded domain $\Omega$ of $\R^d$ ($d\ge1$) supplemented with no-flux boundary conditions 
\begin{align}
\pa_\nu [\gamma(v) u] = \pa_\nu v =0, \qquad \text{ on } (0,\infty)\times \pa \Omega, \label{Neumann}
\end{align}
where $\nu=\nu(x)$ is the outward unit normal vector field at the boundary $\pa \Omega$, and nonnegative initial data 
\begin{align}
u(0,x)=u_{0}(x)\ge0, \qquad v(0,x)=v_{0}(x)\ge0, \qquad \text{ in } \Omega. \label{initial}
\end{align}
Here, $u=u(t,x)\ge0$ is the density of cells (or other organisms) producing a certain chemoattractant (or other field), its concentration being given by $v=v(t,x)\ge0$.
The cell motility $\gamma$ is given by
\begin{align}
\gamma(s)=e^{-s}, \qquad \text{for} \, s\ge 0 \label{PLe3},
\end{align}
while the parameters $\epsilon$ and $\beta$ are two positive constants representing the diffusivity of the chemoattractant and its degradation rate, respectively.
\end{subequations}

Mathematical models of chemotaxis have been studied extensively in the last decades, based on several variants of the classical Patlak-Keller-Segel model (cf. \cite{patlak1953random,keller1971Model}), and we refer to \cite{BBTW2015,stevens1997aggregation,MR2448428,horstmann20031970,Perthame} for an overview of the modelling and basic results. The system \eqref{PLe1}--\eqref{PLe2} is a particular case of the well-studied system introduced by Keller and Segel in their series of three seminal papers (cf. \cite{keller1970initiation,keller1971Model,keller1971TW}). In \cite{keller1971Model}, the equation they propose to model the evolution of the cell density is the following
\begin{equation*}
\partial_t u = \na \cdot (\mu(v) \nabla u - u \chi(v) \nabla v), \qquad \text{in } (0,\infty)\times \Omega,
\end{equation*}
where the cell diffusivity $\mu$ and the chemosensitivity $\chi$ are linked through
\begin{equation*}
\chi(v) = (\alpha-1)\mu'(v).
\end{equation*}
The coefficient $\alpha$ is a parameter related to the distance between the receptors in the cells with a suitable scaling. Equation~\eqref{PLe1} is recovered when $\alpha$ is taken equal to zero. This choice corresponds to the case where there is a single receptor in the cell. Therefore, the cell is not able to perceive a gradient in the concentration by comparing the concentrations at two different spots (this is gradient sensing), it can only sense the local concentration at the point where it is located (this is local sensing). Let us mention the discussion of different sensing mechanisms in \cite{stevens1997aggregation} related to microscopic descriptions of gradient and local sensing, which lead to the derivation of the continuum models mentioned above. Let us also mention the models for direct aggregation of animals (cf. \cite{burger2013individual}) and social segregation dynamics (cf. \cite{burger2018mean}), which are based on similar paradigms. The system \eqref{PLe1}--\eqref{PLe2} (more precisely a more general version, possibly adding a logistic growth term in the cell equation) has been recently used in the Biophysics literature (cf. \cite{Fuetal}) in order to describe the formation of stripe patterns in colonies of bacteria observed in experiments (cf. \cite{Liuetal}). In this study, the cell motility $\gamma$ is taken as a sharply degenerating function of the concentration (more precisely, a regularized version of step function decreasing to zero). Beyond simplicity, this choice is motivated by the experimental data showing that the cell motility gets close to zero above a certain threshold value of the chemoattractant concentration.

To illustrate the difference between the local sensing model and classical models, let us focus on the spatial dimension two. A key property of the minimal Patlak-Keller-Segel model is the ability to describe a mass-dependent threshold phenomenon: 
Cell aggregates only form if there is sufficient mass in the system, which is mathematically described by a blow-up of the solution. This was studied in particular for the model
\begin{equation}
\begin{split}
\partial_t u &= \nabla \cdot (\nabla u - u \nabla v), \qquad \text{in } (0,\infty)\times \Omega,\\
\partial_t v &= \epsilon \Delta v - \beta v + u, \quad \qquad \text{in } (0,\infty)\times \Omega,
\end{split}\label{mksm}
\end{equation}
and for its quasi-stationary version (with the second equation replaced by its elliptic counterpart). For these models, there is global existence and boundedness of solutions for small mass, whereas finite time blow-up could be shown as soon as the mass exceeds a critical threshold (cf. \cite{jager1992explosions,BN1993,herrero1997blow,nagai1998global}). It is hence an interesting question to investigate whether the system \eqref{PLe1}, \eqref{PLe2} has the same behaviour. This question is even more motivated from the fact that both models share the same entropy (or Lyapunov functional), namely 
\begin{equation}\label{def:entropy}
{\cal E}(u,v) :=  \int ( \frac{\epsilon}2  \vert \nabla v \vert^2  + \frac{\beta}2 \vert  v \vert^2 - u v + u \log u - u +1)~dx.
\end{equation}   
The minimal Keller-Segel model \eqref{mksm} is a gradient flow of the form
\begin{align*}
\partial_t u &= \nabla \cdot (u \nabla \partial_u {\cal E} ), & \text{in } (0,\infty)\times \Omega,\\
\partial_t v &= - \partial_v {\cal E}, & \text{in } (0,\infty)\times \Omega,
\end{align*}
for which the entropy has the dissipation (formally)
$$ \frac{d}{dt}{\cal E}(u(t),v(t)) = - \int_\Omega u |\nabla \log ( u e^{-v} )|^2~dx -  \int_\Omega |\epsilon \Delta v + u - \beta v|^2~dx.   $$
The system \eqref{PLe1}, \eqref{PLe2} is of the form
\begin{align*}
\partial_t u &= \Delta (e^{\partial_u {\cal E}} - 1 ), \qquad \text{in } (0,\infty)\times \Omega,\\
\partial_t v &= - \partial_v {\cal E}, \qquad \text{in } (0,\infty)\times \Omega,
\end{align*}
and a simple computation (formally) gives the dissipation
$$ \frac{d}{dt}{\cal E}(u(t),v(t)) = - \int_\Omega 4 |\nabla \sqrt{e^{-v}u}|^2~dx -  \int_\Omega |\epsilon \Delta v + u - \beta v|^2~dx.   $$
This generalized gradient structure also motivates the specific choice of $\gamma$ in \eqref{PLe3}. The natural assumptions the motility $\gamma$ should satisfy from the point of view of the modeling is that it should be typically a nonnegative decreasing function decaying to zero, the above generalized gradient structure is obtained only for an exponential dependence, which also corresponds to the use of Boltzmann statistics in the microscopic local sensing models. 

Based on the properties of the entropy, which is bounded below only for the case of subcritical mass (which is related to logarithmic functional inequalities, cf. \cite{DoPe2004}), it is not surprising that there is a similar threshold behaviour for \eqref{PLe1}, \eqref{PLe2}, in particular the existence of bounded solutions in the subcritical regime. The latter has recently been established in terms of classical solutions in \cite{jin2019keller} and will be further developed for weaker solutions below. The surprising result we obtain in this paper is however a global existence result for arbitrary masses (which actually holds for all dimensions). This implies that the blow-up is delayed compared to the minimal Keller-Segel model, it arises in the infinite time limit instead of finite time. From a modelling point of view it confirms the slower evolution via local sensing compared to the gradient sensing mechanism. From the point of view of mathematical analysis, the form \eqref{PLe1}, sometimes referred to as Laplace form, allows the use of some duality techniques which give an estimate that is crucial for the global existence. More precisely, this duality estimate allows us to derive a lower bound for the entropy (for finite time). Such estimate and its interplay with the entropy will be explained at the formal level in the next section.

Infinite-time blow-up has been identified in other variants of the classical Keller-Segel system, but is then due to different mechanisms. For instance, in \cite{CieslakStinner2012,CieslakStinner2015}, the first equation is nonlinear in $u$ and linear in $v$; that is, the motility and the chemosensitivity are density-dependent (but independent on the chemical concentration), which models a volume-filling effect. This volume-filling effect slows down the aggregation and, if strong enough and suitably tuned, can finally prevent finite-time blow-up without implying boundedness of the solution. In our study, the nonlinearity in $v$ (together with the specific laplacian-form) accounts for the nonlinear chemosensitivity of the organism, which models the saturation of the receptor(s) of the organism. In particular there is no volume-filling effect. Note also that our model has the same entropy as the original Keller-Segel model, it can be viewed as the natural counterpart when replacing the mechanism of gradient sensing by local sensing. Another example of delayed blow-up may be found in \cite{TaoWinkler_critical,Laurencot_indirect} and is due in that case to the indirect production of the chemoattractant; that is, the chemoattractant is no longer produced by the cells, but by an intermediate species.

\subsection{Main results}

Our first two results concern the well-posedness of the system in any dimension. For this we first introduce some notations and the notion of solution that we use.

\paragraph{Notations.} In the following, we consider a bounded domain $\Omega$ in $\R^d$, that we assume of measure one ($|\Omega|=1$) for simplicity. We denote by ${\bf 1}$ the constant function on $\Omega$, and for a function space $V$ on $\Omega$ such that ${\bf 1}$ is in the predual space of $V$, we denote by $V_\diamond$ the subspace of elements of $V$ with mean value zero, i.e. 
$$ V_\diamond = \{ v \in V~|~\langle v , {\bf 1} \rangle = 0 \}. $$
For an element $v \in V$ we denote its mean value by $\overline{v} = \langle v , {\bf 1} \rangle . $
This way the notation is also defined for duals of Sobolev spaces such as the dual space $\Hdual(\Omega)$ of $H^1(\Omega)$. We denote by $K=(-\Delta)^{-1}: \Hdualm(\Omega) \rightarrow H^1_\diamond(\Omega)$ the solution operator of the Neumann problem for the Poisson equation, i.e. $f \mapsto w$, where
\begin{equation*}
\int_\Omega  \nabla w \cdot\nabla \varphi ~dx = \langle f,\varphi\rangle, \qquad \forall~ \varphi \in H^1_\diamond(\Omega).
\end{equation*} 
We assume that $\partial \Omega$ is sufficiently regular so that $K$ maps continuously $L^2_\diamond(\Omega)$ into $H_\diamond^2(\Omega)$. We still denote by $K$ its extension from $D(-\Delta)_\diamond'$ to $L^2_\diamond(\Omega)$, where $D(-\Delta):=\{\varphi \in H^2(\Omega): \partial_\nu \varphi =0 \text{ on } \partial \Omega\}$ is the domain of the Laplace operator with homogeneous Neumann boundary conditions.

\begin{definition}[(Very) weak solution] \label{def:weaksolution}
Let $T>0$ and let
\begin{align*} 
u &\in L^\infty(0,T;L^1(\Omega)) \cap L^\infty(0,T;\Hdual(\Omega)), \\ 
v &\in L^\infty(0,T;H^1(\Omega)) \cap H^1(0,T;\Hdual(\Omega)) 
\end{align*}
We call $(u,v)$ a weak solution of System \eqref{sys:localsensing} on $(0,T)$ if $e^{-v} u \in L^2( (0,T)\times\Omega)$,
\begin{align}
K \partial_t u = - e^{-v} u + \overline{e^{-v} u} \qquad & \text{in } L^2( (0,T)\times\Omega), \label{eq:weaku}\\
\langle \partial_t v, \varphi \rangle + \int_\Omega \left( \epsilon \nabla v \cdot \nabla \varphi + \beta v \varphi \right) ~dx = 
\langle u, \varphi \rangle, \qquad & \forall \varphi \in H^1(\Omega), \, \text{a.e. in } (0,T), \label{eq:weakv}
\end{align}
and, setting $m:=\overline{u_0}$,
\begin{equation} \label{eq:weakinit}
\begin{split}
K (u(0,\cdot)-m) = K (u_0-m)   \qquad & \text{in } L^2(\Omega),\\
v(0,\cdot)=v_0   \qquad & \text{in } \Hdual(\Omega).
\end{split}
\end{equation}
\end{definition}

Notice that we interpret Equation~\eqref{PLe1} in a very weak sense, reminiscent to the treatment of the porous medium or similar nonlinear diffusion equations (cf., e.g., \cite{vazquez2007porous}). The initial condition \eqref{eq:weakinit} makes sense since $\partial_t K (u-m) = K \partial_t u \in L^2( (0,T)\times\Omega)$ by \eqref{eq:weaku} and $K(u-m) \in L^\infty(0,T;H^1(\Omega))$.
\begin{definition}[Weak-strong solution]
Let $T>0$. A weak-strong solution of System \eqref{sys:localsensing} on $(0,T)$ is a weak solution $(u,v)$ which satisfies additionally
\begin{equation*}
\partial_t v = \epsilon \Delta v - \beta v + u \qquad \text{in } L^2((0,T) \times \Omega). 
\end{equation*}
For $u$ and $v$ defined on $(0,+\infty)\times\Omega$, we call $(u,v)$ a global weak-strong solution of System \eqref{sys:localsensing} if it is a weak-strong solution on $(0,T)$ for all times $T>0$.
\end{definition} 

Our main result of existence of global solution is the following.
\begin{thm}[Existence of global solution] \label{thm:globalexistence}
Let $\epsilon > 0$, $\beta \geq 0$, and let $\Omega \subset \R^d$ ($d\ge 1$) be a smooth bounded domain of measure one. Assume that the initial conditions satisfy the following: 
\begin{equation}\label{hypidu}
\begin{split}
& u_0 \in L^1(\Omega) \cap \Hdual(\Omega), \quad u_0 \geq 0,  \\
& m := \int_\Omega u_0~dx, \quad \int_\Omega u_0 \log \frac{u_0}{m}~dx < \infty,
\end{split}
\end{equation}
and
\begin{equation}
v_0 \in H^1(\Omega), \quad v_0 \geq 0. \label{hypidv}
\end{equation}

Then there exist two nonnegative functions $u,\, v$ with
\begin{align*} u &\in L^\infty(0,T;L^1(\Omega)) \cap L^\infty(0,T;\Hdual(\Omega)), \\
v &\in L^\infty(0,T;H^1(\Omega)) \cap H^1(0,T;L^2(\Omega)),
\end{align*}
for all times $T>0$, such that $(u,v)$ is a global weak-strong solution of \eqref{sys:localsensing}. Furthermore it satisfies, for all $t>0$,
\begin{align*} 
& \int_\Omega u(t) ~dx = m,\\
\Vert u(t) -m \Vert_{\Hdual(\Omega)}^2 & + 2 \int_0^t \int_\Omega e^{-v} \, u^2~dx~ds \leq \Vert u_0 - m\Vert_{\Hdual(\Omega)}^2 + 2\,m^2t
,
\end{align*}
and
\begin{align*}
\int_\Omega ( \frac{\epsilon}4  \vert \nabla v \vert^2 + \frac{\beta}4 \vert  v \vert^2 + u \log u - u +1)~dx
 & + 4 \int_0^t \int_\Omega |\nabla \sqrt{e^{- v}u}|^2~dx~ds\\
 +  \int_0^t \int_\Omega |\epsilon \Delta v + u - \beta v|^2~dx~ds & \leq {\cal E}(u_0,v_0) + \frac{1}{\epsilon}\left({\Vert u_0 - m\Vert_{\Hdual(\Omega)}^2 + 2\,m^2t}\right)+ \frac{m^2}{\beta}.
\end{align*}
\end{thm}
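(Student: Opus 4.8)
The plan is to construct solutions by approximation, to establish the two displayed estimates uniformly at the level of smooth approximate solutions, and then to pass to the limit by compactness.

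First I would regularize the problem to obtain smooth, globally defined, strictly positive approximate solutions. A convenient choice is to replace the degenerate motility $e^{-v}$ by a truncated and lifted version bounded away from $0$ and $\infty$, to add a small viscosity $\delta\Delta u$ to \eqref{PLe1}, and to mollify the initial data into smooth, strictly positive data that respect the mass and entropy bounds \eqref{hypidu}. For such a non-degenerate quasilinear system, standard parabolic theory yields classical solutions that are global in time once the a priori bounds below are in force; nonnegativity of $u$ and $v$ follows from the maximum principle, and integrating \eqref{PLe1} against the no-flux condition gives mass conservation $\int_\Omega u\,dx = m$.

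The core of the argument is the pair of a priori estimates, derived formally as indicated in the introduction and made rigorous on the smooth approximations. For the duality estimate I would use that $\|w\|_{\Hdual}^2 = \langle w, Kw\rangle$ for mean-zero $w$ and the symmetry of $K$, so that by \eqref{eq:weaku} one has $\frac{d}{dt}\tfrac12\|u-m\|_{\Hdual}^2 = \langle K\partial_t u, u-m\rangle$; since the constant $\overline{e^{-v}u}$ pairs to zero against the mean-zero function $u-m$, this yields $\frac{d}{dt}\tfrac12\|u-m\|_{\Hdual}^2 + \int_\Omega e^{-v}u^2\,dx = m\int_\Omega e^{-v}u\,dx \le m^2$, using $e^{-v}\le 1$ because $v\ge0$; integrating in time gives the first displayed inequality with its square-root-in-time growth. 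For the entropy, differentiating ${\cal E}$ along the flow produces the dissipation identity featuring the nonnegative terms $4\int|\nabla\sqrt{e^{-v}u}|^2$ and $\int|\epsilon\Delta v + u - \beta v|^2$; the only obstruction to a direct lower bound is the cross term $-\int uv$. I would control it by splitting $\int uv = \int(u-m)v + m\int_\Omega v$, estimating $\int(u-m)v = \int_\Omega \nabla K(u-m)\cdot\nabla v \le \tfrac1\epsilon\|u-m\|_{\Hdual}^2 + \tfrac{\epsilon}{4}\|\nabla v\|_2^2$ and $m\int_\Omega v \le \tfrac{\beta}{4}\|v\|_2^2 + \tfrac{m^2}{\beta}$ by Young's inequality, and then inserting the duality bound already proved. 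This absorbs exactly half of the $|\nabla v|^2$ and $|v|^2$ contributions and, together with the monotonicity of ${\cal E}$, reproduces the reduced functional with coefficients $\epsilon/4$ and $\beta/4$ and the right-hand side of the second displayed estimate.

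Finally I would pass to the limit in the regularization parameter. The uniform bounds place $u$ in $L^\infty(0,T;L^1(\Omega)\cap\Hdual(\Omega))$, $v$ in $L^\infty(0,T;H^1(\Omega))\cap H^1(0,T;L^2(\Omega))$, with $e^{-v}u$ bounded in $L^2$, $\sqrt{e^{-v}u}$ bounded in $L^2(0,T;H^1(\Omega))$, and $\epsilon\Delta v + u - \beta v$ bounded in $L^2$. Since \eqref{eq:weaku} shows $\partial_t(Ku) = K\partial_t u \in L^2$ while $Ku \in L^\infty(0,T;H^1(\Omega))$, an Aubin--Lions argument gives strong convergence of $Ku$; likewise $v$ is compact in $L^2((0,T)\times\Omega)$ and converges almost everywhere, so $e^{-v}$ converges strongly. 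I expect the main obstacle to be the identification of the limit of the degenerate nonlinear flux $e^{-v}u$: weak convergence of $u$ alone does not suffice, and one must combine the almost everywhere convergence of $e^{-v}$ with the $L^2(0,T;H^1(\Omega))$-compactness of $\sqrt{e^{-v}u}$ coming from the entropy dissipation to identify the product and recover \eqref{eq:weaku}. Passing to the limit in \eqref{eq:weakv} and in the strong form of the $v$-equation is routine given the $H^1(0,T;L^2(\Omega))$ bound, the initial conditions \eqref{eq:weakinit} follow from the convergences of $Ku$ and $v$, and the two displayed estimates survive in the limit by weak lower semicontinuity of the norms and dissipation functionals.
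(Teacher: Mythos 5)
Your overall architecture (regularize, prove the duality and entropy estimates uniformly, pass to the limit by compactness) coincides with the paper's, and your derivations of the two displayed estimates --- the duality computation using $e^{-v}\le 1$, and the absorption of the cross term via $\int_\Omega(u-m)v\,dx=\int_\Omega\nabla K(u-m)\cdot\nabla v\,dx$ together with the $\Hdual$ bound --- are exactly the paper's Section~2. The first genuine gap is in your choice of regularization. The entropy identity $\frac{d}{dt}{\cal E}=-4\int_\Omega|\nabla\sqrt{e^{-v}u}|^2\,dx-\int_\Omega|\epsilon\Delta v+u-\beta v|^2\,dx$ relies on the exact matching $\partial_t u=\Delta(e^{\partial_u{\cal E}}-1)$, $\partial_t v=-\partial_v{\cal E}$, i.e.\ on the motility being exactly $e^{-v}$ and the source in the $v$-equation being exactly $u$. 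If you lift the motility to $e^{-v}+\delta$ or add a viscosity $\delta\Delta u$, testing the $u$-equation with $\log u-v$ produces an unsigned remainder of the form $\delta\int_\Omega\nabla u\cdot\nabla v\,dx$, whose control would require a bound on $\int_\Omega u|\nabla v|^2\,dx$ that is not available; so the second displayed estimate does not survive your regularization as stated. This is precisely why the paper mollifies the \emph{arguments} instead of the nonlinearity: with $L_\nu=(I-\nu\Delta)^{-d}$ self-adjoint and positivity-preserving, the system $\partial_t u=\Delta(e^{-L_\nu v}u)$, $\partial_t v=\epsilon\Delta v-\beta v+L_\nu u$ retains the exact gradient structure for the modified entropy ${\cal E}_\nu$ with cross term $-(L_\nu u)v=-u(L_\nu v)$, while $0\le e^{-L_\nu v}\le 1$ keeps the duality estimate intact. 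If you keep your regularization, you must say how the unsigned term is handled.

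The second gap is the identification of the weak limit of the flux $e^{-v}u$. You invoke ``the $L^2(0,T;H^1(\Omega))$-compactness of $\sqrt{e^{-v}u}$'', but the entropy dissipation only gives \emph{boundedness} of $\nabla\sqrt{e^{-v}u}$ in $L^2((0,T)\times\Omega)$; there is no time-derivative estimate on $\sqrt{e^{-v}u}$, so no Aubin--Lions-type argument yields strong compactness of that quantity, and this step would fail as written. Conversely, your claim that ``weak convergence of $u$ alone does not suffice'' undersells what you already have: the uniform bound on $\int_\Omega u\log u\,dx$ gives uniform integrability, hence (Dunford--Pettis) weak convergence of the approximations $u_n\rightharpoonup u$ in $L^1((0,T)\times\Omega)$, and since $0\le e^{-v_n}\le 1$ converges a.e.\ by the strong compactness of $v_n$ that you already established, the product $e^{-v_n}u_n$ converges weakly in $L^1$ to $e^{-v}u$; matched against the weak $L^2$ limit of $e^{-v_n}u_n$, this identifies the flux. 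That is the paper's route (via \cite[Proposition~2.61]{FonsecaLeoni}) and it closes the argument without any compactness of $\sqrt{e^{-v}u}$.
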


\begin{rem}[Regularity of the domain]
The regularity required for the boundary of the domain is the minimal regularity such that $(I-\Delta)^{-1}$ (the self-adjoint solution operator for the homogeneous Neumann problem) maps $H^m(\Omega)$ into $H^{m+2}(\Omega)$ for all $m\in \{0,\dots,2d-1\}$. This property is used in the approximation procedure in Section~\ref{sec:globalexistence}. 
\end{rem}
\begin{rem}[The parabolic-elliptic problem]\label{rem:parell}
For $\epsilon > 0$, $\beta \geq 0$, $\Omega \subset \R^d$ ($d \ge 1$) a smooth bounded domain of measure one, and an initial condition $u_0$ satisfying \eqref{hypidu}, we consider the parabolic-elliptic problem
\begin{equation}\label{parell}
\begin{split}
\partial_t u &= \Delta (e^{-v} u), \qquad \qquad \text{in } (0,\infty)\times \Omega,\\
0 &= \epsilon \Delta v - \beta v + u, \qquad \text{in } (0,\infty)\times \Omega.
\end{split}
\end{equation}
We expect that our methods of proof of existence hold for this problem with minor adaptations, so that there exists a global weak solution of \eqref{parell} with no-flux boundary conditions and initial data $u_0$. We give in Section~\ref{sec:parell} some elements in this direction.
\end{rem}

Having verified the global existence of solutions it is natural to consider the question of uniqueness. Indeed we are able to prove uniqueness by the following result, with the restriction that we need to assume minimal additional regularity of $u$. 
\begin{thm}[Uniqueness of weak-strong solution]\label{thm:uniqueness}
Let $\epsilon > 0$, $\beta \geq 0$, and let $\Omega \subset \R^d$ ($d\ge 1$) be a smooth bounded domain of measure one. There is at most one weak-strong solution $(u,v)$ to \eqref{sys:localsensing} such that
\begin{equation*}
u\in L^\infty(0,T;L^q(\Omega)) \;\text{ for all }\; T>0\ , 
\end{equation*}
with $q>2$ when $d\in \{1,2\}$ and $q=d$ when $d\ge 3$.
\end{thm}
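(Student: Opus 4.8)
The plan is to argue by a duality-energy estimate that exploits the Laplace structure \eqref{eq:weaku} of the first equation. Suppose $(u_1,v_1)$ and $(u_2,v_2)$ are two weak-strong solutions with the stated regularity and the same initial data, and set $U:=u_1-u_2$, $V:=v_1-v_2$. Since both solutions conserve mass, $\overline{U(t)}=0$ for all $t$, so $KU$ is well defined and $\|U\|_{\Hdual(\Omega)}^2=\int_\Omega|\nabla KU|^2\,dx$. Writing \eqref{eq:weaku} for each solution and subtracting, $\partial_t(KU)=-(e^{-v_1}u_1-e^{-v_2}u_2)+\overline{e^{-v_1}u_1-e^{-v_2}u_2}$ in $L^2((0,T)\times\Omega)$; pairing this $L^2$ identity with $U=-\Delta KU$ (legitimate by the regularity of weak-strong solutions) and using $\overline U=0$ yields
\begin{equation*}
\frac12\frac{d}{dt}\|U\|_{\Hdual(\Omega)}^2 + \int_\Omega e^{-v_1}U^2\,dx = -\int_\Omega (e^{-v_1}-e^{-v_2})\,u_2\,U\,dx .
\end{equation*}
The diffusion term on the left is the coercive quantity I intend to exploit, while the right-hand side is the cross term that must be absorbed.

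For the chemical I would use the weak-strong relation, which gives $\partial_t V=\epsilon\Delta V-\beta V+U$ in $L^2$; testing with $V$ produces
\begin{equation*}
\frac12\frac{d}{dt}\|V\|_{L^2(\Omega)}^2 + \epsilon\|\nabla V\|_{L^2(\Omega)}^2 + \beta\|V\|_{L^2(\Omega)}^2 = \int_\Omega U V\,dx .
\end{equation*}
To close the estimate I would form $\Phi:=\tfrac12\|U\|_{\Hdual(\Omega)}^2+\tfrac\lambda2\|V\|_{L^2(\Omega)}^2$ for a large constant $\lambda$ and bound the cross term by $\int_\Omega |V|\,u_2\,|U|\,dx$, using $|e^{-v_1}-e^{-v_2}|\le|V|$ (valid since $v_i\ge0$). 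By Hölder's inequality with exponents $q$, $p:=2q/(q-2)$ and $2$, followed by the Sobolev embedding $H^1(\Omega)\embedded L^{p}(\Omega)$, this is controlled by $\|u_2\|_{L^q}\,\|V\|_{H^1}\,\|U\|_{L^2}$. \emph{Here the precise hypothesis on $q$ is exactly what makes $p\le 2^*$, so that the Sobolev embedding is available:} for $d\ge3$ the choice $q=d$ gives $p=2^*=2d/(d-2)$, while for $d\in\{1,2\}$ any $q>2$ yields a finite admissible $p$. The factors $\|\nabla V\|^2$ produced this way are absorbed by the dissipation $\lambda\epsilon\|\nabla V\|^2$ for $\lambda$ large (using $\|u_2\|_{L^q}\le M<\infty$), and $\int_\Omega UV$ is handled via $\|U\|_{\Hdual(\Omega)}\|V\|_{H^1}$ and Young's inequality, leaving $\tfrac{d}{dt}\Phi\le C\Phi$; since $\Phi(0)=0$ (equal data, and $V(0)=0$), Gronwall's lemma forces $\Phi\equiv0$, i.e. $U=V=0$.

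The main obstacle is the \emph{degeneracy} of the coefficient $e^{-v_1}$, which prevents estimating $\int_\Omega e^{-v_1}U^2\,dx$ from below by $\|U\|_{L^2}^2$ unless $v_1$ is bounded; but with only $v_0\in H^1(\Omega)$ the free evolution of $v$ need not lie in $L^\infty$ when $d\ge2$. I would circumvent this by decomposing $v_i=z+\widetilde v_i$, where $z$ solves the source-free problem $\partial_t z=\epsilon\Delta z-\beta z$ with $z(0)=v_0$ (hence common to both solutions) and $\widetilde v_i$ solves the same equation with source $u_i$ and zero initial data. Since $u_i\in L^\infty(0,T;L^q(\Omega))$ with $q>d/2$, parabolic smoothing gives $\widetilde v_i\in L^\infty((0,T)\times\Omega)$, so $e^{-\widetilde v_1}\ge\delta>0$, whereas the possibly unbounded factor $e^{-z}\le1$ cancels between the two sides: one has $\int_\Omega e^{-v_1}U^2\,dx\ge\delta\int_\Omega e^{-z}U^2\,dx$ and, because $V=\widetilde v_1-\widetilde v_2$, the cross term is bounded by $\int_\Omega e^{-z}|V|u_2|U|\,dx$, so a Cauchy--Schwarz splitting with the \emph{common} weight $e^{-z}$ lets it be absorbed into $\int_\Omega e^{-v_1}U^2\,dx$ with no pointwise bound on $v_1$. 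With this modification the Gronwall argument above goes through in all dimensions, yielding uniqueness.
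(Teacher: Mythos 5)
Your argument is correct, and its skeleton --- the $\Hdual$ energy for $U=u_1-u_2$ obtained by pairing \eqref{eq:weaku} with $U=-\Delta KU$, the $L^2$ energy for $V=v_1-v_2$, H\"older's inequality with the exponent $p=2q/(q-2)$ followed by the Sobolev embedding (exactly where the hypotheses $q>2$ for $d\le2$ and $q=d$ for $d\ge3$ enter), and a Gronwall argument on a combined functional --- coincides with the paper's. Where you genuinely diverge is in the treatment of the degenerate weight $e^{-v}$. The paper splits the nonlinearity symmetrically,
\begin{equation*}
e^{-v_1}u_1-e^{-v_2}u_2=\tfrac12(u_1+u_2)\bigl(e^{-v_1}-e^{-v_2}\bigr)+\tfrac12\bigl(e^{-v_1}+e^{-v_2}\bigr)U,
\end{equation*}
and after Young's inequality is left with the weight $|e^{-v_1}-e^{-v_2}|^2/(e^{-v_1}+e^{-v_2})$, which is bounded by $|V|^2$ via the elementary pointwise estimates $|e^{-X}-e^{-Y}|\le e^{-\min(X,Y)}|X-Y|$ and $e^{-\min(X,Y)}\le\sqrt{e^{-X}+e^{-Y}}$ for $X,Y\ge0$; no lower bound on $e^{-v}$ is ever needed, and the coercive term $\int(e^{-v_1}+e^{-v_2})U^2\,dx$ is simply discarded. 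Your asymmetric splitting instead forces you to weigh the cross term against $\int e^{-v_1}U^2\,dx$, and you correctly diagnose and repair the resulting degeneracy by writing $v_i=z+\widetilde v_i$, using $L^q\to L^\infty$ parabolic smoothing (available since $q>d/2$ under the stated hypotheses) to get $e^{-\widetilde v_1}\ge\delta>0$, while the possibly unbounded free part $e^{-z}$ cancels in the Cauchy--Schwarz step because $V=\widetilde v_1-\widetilde v_2$. Both routes close; the paper's is more self-contained, replacing the Duhamel representation and semigroup estimates by a two-line pointwise computation, whereas yours isolates the structural reason the degeneracy is harmless at the cost of importing linear parabolic theory. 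A minor shared caveat: when $\beta=0$ the zero-order dissipation of $V$ vanishes, so the $\|V\|_2^2$ portion of $\|V\|_{H^1(\Omega)}^2$ must be sent into the Gronwall constant rather than absorbed; your formulation accommodates this, while the paper's constant $\min\{\epsilon,2\beta\}$ degenerates there and needs the same cosmetic fix.
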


Due to the additional regularity needed for the uniqueness proof it is natural to investigate additional regularity, possibly with further conditions on the initial values. The results we can obtain in this case depend on the dimension however, so we will distinguish different cases. We start with spatial dimension one for completeness and then discuss the two-dimensional case, which appears to be the most interesting for applications and the expected mass-dependent blow-up.

\begin{prop}[Regularity, dimension 1]\label{reg_dim1}
Let $d=1$ and, in addition to assumptions of Theorem~\ref{thm:globalexistence}, $u_0 \in L^r(\Omega)$ for $r \geq 1$. Then, for any $T>0$, $u\in L^\infty(0,T;L^r(\Omega))$ and $v\in L^\infty((0,T)\times \Omega)$. In particular, for $u_0 \in L^r(\Omega)$ with $r > 2$, the solution is unique.
\end{prop}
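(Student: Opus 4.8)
The plan is to exploit the feature that makes dimension one special, namely the continuous embedding $H^1(\Om)\embedded L^\infty(\Om)$, in order to first upgrade the regularity of $v$ and then to close an $L^r$-energy estimate for $u$. The boundedness of $v$ is almost immediate: Theorem~\ref{thm:globalexistence} provides $v\in L^\infty(0,T;H^1(\Om))$ with $v\ge0$, and the one-dimensional Sobolev embedding $H^1(\Om)\embedded L^\infty(\Om)$ gives at once $v\in L^\infty((0,T)\times\Om)$, with $0\le v(t,x)\le M$ for some $M=M(T)<\infty$. Consequently the motility is bounded above and below by positive constants, $e^{-M}\le e^{-v}\le 1$ on $(0,T)\times\Om$, so that \eqref{PLe1} is effectively a uniformly parabolic equation. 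This already proves the assertion on $v$.

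Next I would derive the $L^r$-bound on $u$. Testing \eqref{PLe1} with $u^{r-1}$ and integrating by parts using the no-flux condition \eqref{Neumann}, after expanding $\nabla(e^{-v}u)=e^{-v}(\nabla u-u\nabla v)$ one obtains
\[
\frac{d}{dt}\int_\Om u^r\,dx = -r(r-1)\int_\Om e^{-v}u^{r-2}|\nabla u|^2\,dx + r(r-1)\int_\Om e^{-v}u^{r-1}\nabla u\cdot\nabla v\,dx.
\]
Writing $f:=u^{r/2}$, so that $u^{r-2}|\nabla u|^2=\tfrac4{r^2}|\nabla f|^2$ and $u^r=f^2$, the dissipative first term controls $e^{-M}\|\nabla f\|_{L^2}^2$ from below. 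The cross term is estimated by Young's inequality in terms of $\int_\Om e^{-v}u^{r-2}|\nabla u|^2\,dx$, half of which is absorbed into the first term, and of $\int_\Om e^{-v}u^r|\nabla v|^2\,dx\le\int_\Om f^2|\nabla v|^2\,dx$. This last integral is where dimension one is decisive: by the one-dimensional Gagliardo--Nirenberg inequality $\|f\|_{L^\infty}^2\le C\|f\|_{L^2}\|f\|_{H^1}$ together with the bound $\|\nabla v\|_{L^2}^2\le C(T)$ from Theorem~\ref{thm:globalexistence}, one gets
\[
\int_\Om f^2|\nabla v|^2\,dx \le C\,\|f\|_{L^2}\big(\|f\|_{L^2}+\|\nabla f\|_{L^2}\big)\|\nabla v\|_{L^2}^2,
\]
and a further application of Young's inequality lets the $\|\nabla f\|_{L^2}^2$ contribution be absorbed into the remaining half of the dissipation. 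What survives is a differential inequality of the form $\frac{d}{dt}\int_\Om u^r\,dx\le B(r,T)\int_\Om u^r\,dx$, and Gronwall's lemma yields $\|u(t)\|_{L^r}^r\le e^{B(r,T)t}\|u_0\|_{L^r}^r$, hence $u\in L^\infty(0,T;L^r(\Om))$.

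These formal manipulations must be justified on the approximate solutions constructed in Section~\ref{sec:globalexistence} and then passed to the limit by weak lower semicontinuity of the norms; for $1<r<2$ the negative power $u^{r-2}$ is handled by the standard regularization $u\mapsto u+\delta$ before letting $\delta\to0$, while $r=1$ is simply the mass conservation already contained in Theorem~\ref{thm:globalexistence}. Finally, the uniqueness assertion is a direct consequence: for $u_0\in L^r(\Om)$ with $r>2$ we have just shown $u\in L^\infty(0,T;L^r(\Om))$ for every $T>0$, which is exactly the regularity required in Theorem~\ref{thm:uniqueness} with $q=r>2$ (admissible in dimension $d=1$), so the weak-strong solution is unique.

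I expect the main obstacle to be the control of the cross term $\int_\Om e^{-v}u^{r-1}\nabla u\cdot\nabla v\,dx$: it is genuinely quadratic in the unknowns, and in higher dimensions it could not be absorbed into the available dissipation by this elementary argument. The one-dimensional embedding $H^1(\Om)\embedded L^\infty(\Om)$, which makes $\|f\|_{L^\infty}$ controllable by $\|f\|_{L^2}^{1/2}\|f\|_{H^1}^{1/2}$ and thereby leaves just enough room to reabsorb $\|\nabla f\|_{L^2}^2$, is precisely what allows the estimate to close and is the reason the result is restricted to $d=1$.
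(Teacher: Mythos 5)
Your proposal is correct, and its overall skeleton (boundedness of $v$ via the one-dimensional embedding $H^1(\Omega)\hookrightarrow C(\overline\Omega)$, testing \eqref{PLe1} with $u^{r-1}$, Young's inequality, Gronwall) coincides with the paper's. The one place where you genuinely diverge is the treatment of the cross term $\int_\Omega e^{-v}u^{r-1}\partial_x u\,\partial_x v\,dx$. The paper absorbs the \emph{entire} dissipation, is left with $\tfrac{r(r-1)}{4}\int_0^t\|\partial_x v\|_\infty^2\|u\|_r^r\,ds$, and controls $\|\partial_x v\|_\infty$ by $\|v\|_{H^2}$; this requires the preliminary upgrade $u\in L^2((0,T)\times\Omega)$ (from the duality estimate and the lower bound $e^{-v}\ge 1/C(T)$) and then $\Delta v\in L^2((0,T)\times\Omega)$ via the entropy dissipation, so that $s\mapsto\|v(s)\|_{H^2}^2$ is integrable and Gronwall applies with an $L^1$ coefficient. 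You instead keep half of the dissipation (which is where the lower bound $e^{-v}\ge e^{-M}$ enters for you), apply the one-dimensional Gagliardo--Nirenberg inequality to $f=u^{r/2}$, and reabsorb the resulting $\|\partial_x f\|_2^2$ contribution, so that only the time-uniform bound $\|\partial_x v\|_{L^\infty(0,T;L^2)}\le C(T)$ from Theorem~\ref{thm:globalexistence} is needed and Gronwall applies with a constant coefficient. Your route buys you independence from the intermediate regularity $v\in L^2(0,T;H^2(\Omega))$; the paper's route is slightly shorter once that regularity is in hand and avoids the absorption bookkeeping. Both arguments are formal at the same level (the identity obtained by testing with $u^{r-1}$ must be justified on the approximate solutions, with the regularization $u+\delta$ for $1<r<2$, exactly as you indicate), and your deduction of uniqueness from Theorem~\ref{thm:uniqueness} with $q=r>2$ is the same as the paper's.
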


In the following we consider the case $d=2$, the main result we prove is again a uniform bound on $u$ and $v$ for finite time, given the initial values are bounded:

\begin{prop}[Regularity, dimension 2]\label{P.PLr0}Let $d=2$ and, in addition to assumptions of Theorem \ref{thm:globalexistence}, $u_0 \in L^\infty(\Omega)$, $v_0 \in L^\infty(\Omega)$. Then, for any $T>0$, $u\in L^\infty((0,T)\times \Omega)$ and $v\in L^\infty((0,T)\times \Omega)$. As a consequence the solution is unique.
\end{prop}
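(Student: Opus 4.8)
The plan is to run a bootstrap that climbs from the entropy-level integrability provided by Theorem~\ref{thm:globalexistence} up to $L^\infty$ bounds, the finite-time horizon being essential: because the constants in Theorem~\ref{thm:globalexistence} are allowed to grow in $T$, no subcriticality of the mass is needed. First I would record the consequences of Theorem~\ref{thm:globalexistence} that serve as the base of the induction. Nonnegativity gives $0\le e^{-v}\le 1$; mass conservation and the entropy bound give $u\in L^\infty(0,T;L^1(\Omega))$ and $u\log u\in L^\infty(0,T;L^1(\Omega))$, while the entropy dissipation yields $v\in L^\infty(0,T;H^1(\Omega))$ together with $\int_0^T\!\int_\Omega e^{-v}|\nabla v|^2\,dx\,dt\le\int_0^T\!\int_\Omega|\nabla v|^2\,dx\,dt<\infty$, i.e. $e^{-v/2}\in L^\infty(0,T;H^1(\Omega))$ (using $\nabla e^{-v/2}=-\tfrac12 e^{-v/2}\nabla v$). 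Finally, the $\Hdual$-estimate gives $\int_0^T\!\int_\Omega e^{-v}u^2\,dx\,dt\le C(T)$, and since $(e^{-v}u)^2=e^{-v}(e^{-v}u^2)\le e^{-v}u^2$, the flux density $z:=e^{-v}u$ lies in $L^2((0,T)\times\Omega)$; through $\partial_t u=\Delta z$ this is the initial space-time integrability that starts the iteration.

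The core is a family of $L^p$ energy estimates for $u$. Testing $\partial_t u=\Delta(e^{-v}u)$ against $u^{p-1}$ and integrating by parts gives
\[
\frac1p\frac{d}{dt}\int_\Omega u^p\,dx+\frac{4(p-1)}{p^2}\int_\Omega e^{-v}\,|\nabla u^{p/2}|^2\,dx=(p-1)\int_\Omega e^{-v}\,u^{p-1}\,\nabla u\cdot\nabla v\,dx .
\]
After a Young inequality that absorbs half the (degenerate) dissipation, everything reduces to controlling the chemotactic coupling $\int_\Omega e^{-v}u^{p}|\nabla v|^2\,dx$. \textbf{This is the main obstacle}, and it is genuinely critical in dimension two: the dissipation carries the weight $e^{-v}$, so as long as $v$ is not known to be bounded above the diffusion may degenerate, and one cannot simply invoke the Sobolev embedding $H^1\embedded L^q$ (which in $d=2$ fails for $q=\infty$ anyway). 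I would close this term by a stepwise argument that keeps $p$ close to $1$ at first, exploiting that the degeneracy weight appears \emph{symmetrically} on both sides together with the bound $e^{-v/2}\in L^\infty(0,T;H^1(\Omega))$, and upgrading the regularity of $\nabla v$ from the second equation: by maximal parabolic regularity for $\partial_t v-\epsilon\Delta v+\beta v=u$, an $L^r$-in-space control of $u$ produces an $L^r_t W^{2,r}_x$ control of $v$, which, fed through a Gagliardo--Nirenberg interpolation in $2$D, bounds $\int_\Omega u^p|\nabla v|^2\,dx$ by the dissipation plus the finite-time data. Iterating this coupling raises the integrability of $u$ to $L^\infty(0,T;L^p(\Omega))$ for some, and then every, $p>1$.

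Once $u\in L^\infty(0,T;L^p(\Omega))$ for a single $p>1$, the smoothing estimate for the Neumann heat semigroup, $\|e^{s(\epsilon\Delta-\beta)}\|_{L^p\to L^\infty}\lesssim s^{-d/(2p)}$ with $d=2$ and $p>1$ so that the exponent is $<1$, applied to the Duhamel formula for $v$ yields $v\in L^\infty((0,T)\times\Omega)$. This removes the degeneracy: $e^{-v}\ge e^{-\|v\|_{L^\infty}}>0$ on $(0,T)\times\Omega$, so the first equation becomes uniformly parabolic in divergence form, and a Moser/Alikakos iteration on the now \emph{nondegenerate} $L^p$ estimates gives $\|u(t)\|_{L^p}$ bounded uniformly in $p$, hence $u\in L^\infty((0,T)\times\Omega)$. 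Finally, since $u\in L^\infty((0,T)\times\Omega)\subset L^\infty(0,T;L^q(\Omega))$ for every $q$, in particular for some $q>2$, Theorem~\ref{thm:uniqueness} applies and gives uniqueness, which is the last assertion of the statement.
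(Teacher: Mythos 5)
Your overall architecture matches the paper's (base estimates from Theorem~\ref{thm:globalexistence}, then $L^p$ bounds on $u$, then $v\in L^\infty$ via the Neumann heat semigroup, then a Moser-type iteration on the now nondegenerate equation, then uniqueness via Theorem~\ref{thm:uniqueness}), and you correctly isolate the one genuinely critical term, namely $\int_\Omega e^{-v}u^p|\nabla v|^2\,dx$ against the degenerate dissipation $\int_\Omega e^{-v}|\nabla u^{p/2}|^2\,dx$. But the way you propose to close it is where the gap lies. The sketch ``maximal parabolic regularity turns $L^r$-in-space control of $u$ into $L^r_tW^{2,r}_x$ control of $v$, feed through Gagliardo--Nirenberg, iterate starting from $p$ close to $1$'' is circular at the starting point: before the loop closes you have no unweighted $L^r$ bound on $u$ for any $r>1$ (the duality estimate only gives $\int_0^T\int_\Omega e^{-v}u^2$, and the entropy only $u\log u\in L^\infty_tL^1_x$), so maximal regularity does not get off the ground, and nothing in your sketch explains why the resulting Gronwall coefficient is integrable in time. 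You also drop the weight $e^{-v}$ when you finally write the term to be bounded, which is precisely the weight that makes it controllable.

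The paper's resolution, which is the heart of the proof, is a single algebraic observation at $p=2$: setting $w:=ue^{-v/2}$ and testing \eqref{PLe1} with $u=we^{v/2}$ gives the exact identity
\begin{equation*}
\frac{1}{2}\frac{d}{dt}\|u\|_{L^2(\Omega)}^2=-\|\nabla w\|_{L^2(\Omega)}^2+\frac{1}{4}\|w\nabla v\|_{L^2(\Omega)}^2\ ,
\end{equation*}
so the dissipation is the \emph{unweighted} $H^1$-seminorm of $w$ and the bad term carries $w$ rather than $u$. One then estimates $\|w\nabla v\|_2^2\le\|w\|_4^2\|\nabla v\|_4^2$, uses two-dimensional Gagliardo--Nirenberg on $w$ (so that $\|w\|_4^2\lesssim\|w\|_{H^1}\|w\|_2$) and Gagliardo--Nirenberg plus Calder\'on--Zygmund plus the equation on $v$ (so that $\|\nabla v\|_4^2\lesssim 1+\|\partial_tv\|_2+\|u\|_2$), absorbs $\|w\|_{H^1}^2$ into the dissipation, and is left with a Gronwall inequality for $\|u\|_2^2$ whose coefficient is $\|\partial_tv\|_2^2+\|w\|_2^2$. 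The closing of the loop rests entirely on the fact that this coefficient is in $L^1(0,T)$: $\int_0^T\|\partial_tv\|_2^2\,ds$ is controlled by the entropy dissipation and $\int_0^T\|w\|_2^2\,ds=\int_0^T\int_\Omega e^{-v}u^2\,dx\,ds$ by the duality estimate. You list both of these ingredients in your ``base'' but never connect them to the $L^p$ loop; without that connection the first nontrivial bound $\|u(t)\|_2\le C(T)$ is not established, and everything downstream (the $L^2\to L^\infty$ semigroup bound for $v$, the removal of the degeneracy, the Moser iteration) has nothing to stand on. The remainder of your plan, once $\|u\|_{L^\infty_tL^2_x}$ and hence $\|v\|_\infty$ are in hand, does agree with the paper (which invokes \cite[Lemma~A.1]{TaoWinklerBound} for the final $L^\infty$ step).
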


A striking consequence of this result, compared to the minimal Keller-Segel model, is the absence of finite-time blow-up (in $L^\infty$) in dimension~2. Indeed, Proposition~\ref{P.PLr0} rules out finite time blow-up in $L^\infty$ for arbitrary initial data and extends \cite[Theorem~1.1]{jin2019keller}, where the existence of global solutions is established only when $m\in (0,4\pi\epsilon)$. Under this additional condition, it is also shown in \cite[Theorem~1.1]{jin2019keller} that $(u,v)\in L^\infty((0,\infty)\times \Omega; \mathbb{R}^2)$, see also \cite[Theorem~2.3]{FujieJiang20}. Also, combining  Proposition~\ref{P.PLr0} and \cite[Theorem~1.1]{jin2019keller} guarantees that there are initial data with $m>4\pi\epsilon$ for which the corresponding solution is global and unbounded, see also \cite[Theorem~2.3]{FujieJiang20}. Finally, Proposition~\ref{P.PLr0} also provides an alternative proof of \cite[Theorem~2.1]{FujieJiang20} for the specific choice $\gamma(s)=e^{-s}$, $s\ge 0$.

\subsection{Litterature on System \eqref{sys:localsensing}} The mathematical analysis of System \eqref{PLe1}--\eqref{PLe2} has been tackled in some very recent works. In \cite{TaoWinkler_M3AS} the authors choose the motility function $\gamma$ to be smooth and both upper and lower bounded by some positive constants. Under these assumptions, they obtain weak solutions in all spatial dimensions $d\ge2$, and, in dimension $d=2$, assuming furthermore that the domain $\Omega$ is convex, they obtain smooth solutions (and also in dimension $d\ge3$ under an appropriate smallness assumption on the initial data). In \cite{DKTY19}, the motility $\gamma$ is taken slowly algebraically decreasing (that is, with a constraint on the algebraic rate which depends on the spatial dimension). Weak solutions are obtained in spatial dimensions $d=1,2,3$ (actually smooth solutions in dimension $d=1$). The long-time behaviour is also explored through a numerical and linear stability analysis. We also mention the work \cite{YoonKim2017} where smooth solutions are obtained in the two-dimensional case for a motility $\gamma$ taken sufficiently small. Both works \cite{TaoWinkler_M3AS,DKTY19} rely on duality and energy techniques (but not on an entropy in the strict sense of the term). Shortly afterwards, in \cite{FujieJiang20}, smooth solutions are obtained in dimension $d=2$ for a very generic motility (positive and decreasing to zero). Furthermore, the authors obtain uniformly bounded smooth solutions if the motility $\gamma(v)$ decays at most algebraically in dimension $d=2$, and at most in $1/v$ in dimension $d=3$. Their method relies on the introduction of an adequate auxiliary elliptic problem that enjoys a comparison principle.

When it comes to the case where the motility has precisely the exponential decay \eqref{PLe3}, it has been studied specifically in two-dimensional domains in \cite{jin2019keller,FujieJiang20}. Both works focus on smooth solutions and the question of the possibility of blow-up. They show the existence of a critical mass such that, in the subcritical case, there exist uniformly bounded smooth solutions, and convergence to a single stationary solution is further shown in \cite{FujieJiang20}. In the supercritical case, they obtain the blow-up in $L^\infty$, which is furthermore proved to happen only in infinite time in \cite{FujieJiang20}.

Compared to these results, one added value of the present work is first that we obtain global solutions in any dimension $d\ge 1$. Furthermore, our solutions are weak (more precisely weak-strong) solutions, allowing to consider more general initial data. Note that one new difficulty that we encounter when dealing with weak solutions is the construction of an adequate approximation procedure. A crucial step is to define an appropriate approximate system that conserves both the entropy and the duality structures, which are of very different nature. Note that we can connect our weak solutions with the strong solutions obtained in \cite{jin2019keller,FujieJiang20} thanks to our regularity and uniqueness results.

Finally, we refer to \cite{FujieJiang20qs,AhnYoon19qs} for studies of the quasi-stationary model, where the second equation is replaced by its elliptic counterpart.

\subsection{Organization of the paper}

In Section~\ref{sec:structure} we discuss the basic properties of the model and present some \emph{a priori} estimates, which we make rigorous in Section~\ref{sec:globalexistence}. The latter is devoted to the global existence of weak solutions, which we derive by an approximation argument that preserves the generalized gradient structure and only modifies the cross-term in the entropy. In Section~\ref{sec:uniqueness} we investigate further regularity and uniqueness. Finally we conclude with some remarks and outlooks in Section~\ref{sec:conclusion}.

\vspace{2cm}

\section{Structure of the model and \emph{a priori} estimates} \label{sec:structure}

In the following we discuss the mathematical structure of the model and formally derive \emph{a priori} estimates, which will be made rigorous in existence proofs in Section \ref{sec:globalexistence}.

Since we are looking for solutions having a finite entropy ${\cal E}$ as defined in \eqref{def:entropy}, we restrict ourselves to meaningful initial values that lead to ${\cal E}(u_0,v_0) < \infty$, namely we assume that $u_0$ and $v_0$ satisfy \eqref{hypidu} and \eqref{hypidv}, respectively.

\subsection{Basic \emph{a priori} estimates} 

As mentioned in the introduction the system has some formal gradient structure and natural mass 
preservation, so we can formally derive estimates from these structures. We will assume sufficient regularity to justify the computations in this section and make the results rigorous later.

\paragraph{Mass.}
Let $t>0$. First of all, the system preserves nonnegativity and the mean value of $u$, thus we obtain
$$ \Vert u(t) \Vert_{L^1(\Omega)} = \bar{u}(t) = m, $$
and 
$$ \frac{d}{dt} \left( \bar{v}(t) - \frac{m}{\beta}\right)= m - \beta\bar{v}(t) , $$
which implies 
$$ \bar{v}(t) = \frac{m}{\beta} + e^{-\beta t} \lk \overline{v_0} - \frac{m}{\beta}\rk. $$
In particular since $v$ is also nonnegative
$$ \Vert v(t) \Vert_{L^1(\Omega)} \leq \max\{ \frac{m}{\beta}, \Vert v_0 \Vert_{L^1(\Omega)} \}. $$

\paragraph{Entropy dissipation.}
We recall the definition \eqref{def:entropy} of the entropy:
\begin{equation*}
{\cal E}(u,v) =  \int ( \frac{\epsilon}2  \vert \nabla v \vert^2  + \frac{\beta}2 \vert  v \vert^2 - u v + u \log u - u +1)~dx,
\end{equation*}   
and emphasize that it is in general not bounded from below due to the negative and quadratic term $-uv$, see \cite{BBTW2015,horstmann20031970,MR2448428,stevens1997aggregation,Perthame}. From the entropy dissipation we obtain the following \emph{a priori} identity
$$ \frac{d}{dt}{\cal E}(u(t),v(t)) = - 4 \int_\Omega |\nabla \sqrt{e^{-v}u}|^2~dx -  \int_\Omega |\epsilon \Delta v + u - \beta v|^2~dx.   $$
Note that, from the entropy dissipation, we expect that $u e^{-v}$ and $\epsilon \Delta v + u - \beta v$ have increased regularity, provided the entropy is bounded from below. This gives a particular motivation for the definition of weak-strong solutions given above. 

\paragraph{Duality estimate.}
Finally, we have an estimate in $\Hdual$, which follows by testing Equation~\eqref{eq:weaku} with $u- m$. We see that 
$$ \frac{1}2 \frac{d}{dt} \langle u- m,K(u-m)  \rangle = - \langle u- m, e^{-v} u - \overline{e^{-v} u} \rangle = - \int_\Omega e^{-v} u^2 ~dx + m~ \overline{e^{-v} u} .$$
Owing to the nonnegativity of $u$ and $v$,
$$   \overline{e^{-v} u} = \int_\Omega e^{-v} u ~dx \leq \int_\Omega  u ~dx = m , $$ 
so that, for all $T>0$,
\begin{equation} \label{eq:dualityestimate}
\frac{1}2 \sup_{[0,T]}  \Vert u - m \Vert_{\Hdual(\Omega)}^2 + \int_0^T \int_\Omega e^{-v} u^2 ~dx~dt \leq  \frac{1}2 \Vert u_0 - m \Vert_{\Hdual(\Omega)}^2 + m^2 T. 
\end{equation}

\subsection{Entropy on $\Hdual$-bounded sets}\label{sec:entropy_on_Hdual}

A key property of the entropy is that, due to the nonlinear negative term $-uv$, it is in general not bounded from below, except when $d=1$ or $d=2$ in the subcritical case. This feature leads to the occurrence of finite time blow-up for the  minimal Keller-Segel model in the supercritical case when $d=2$ and in higher spatial dimensions $d\ge 3$. Since the model \eqref{PLe1}, \eqref{PLe2} has the same entropy functional, a similar behaviour is to be expected. However, the key difference is that the changed dynamics results in a bound on the $\Hdual$-norm of $u$ for finite time, and the entropy is bounded below if there is such an additional $\Hdual$-bound.

Indeed, suppose that two nonnegative functions $u$ and $v$ defined on $\Omega$ satisfy
$$\Vert u - m \Vert_{\Hdual(\Omega)} \leq C,$$
for some positive constant $C$. Then we can estimate
\begin{align*} 
- \int_\Omega u v~dx &=- \int_\Omega (u-m) v~dx - \int_\Omega m v~dx = \int_\Omega v\, \Delta K(u-m) ~dx - \int_\Omega m v~dx \\
& = - \int_\Omega \nabla K(u-m) \cdot \nabla v ~dx - \int_\Omega m v~dx \\
& \ge - \frac{C^2}{\epsilon} - \frac{\epsilon}4 \int_\Omega |\nabla v|^2 ~dx -  \frac{m^2}\beta - \frac{\beta}4 \int_\Omega |v|^2~dx.
\end{align*}
As a consequence, we can estimate the entropy from below by
\begin{align*} 
{\cal E}(u,v) &=  \int_\Omega \left( \frac{\epsilon}2  \vert \nabla v \vert^2  +  \frac{\beta}2 \vert  v \vert^2 - u v + u \log u - u +1 \right)~dx \\
&\geq - \frac{C^2}{\epsilon}- \frac{m^2}{\beta} + \int_\Omega \left( \frac{\epsilon}4  \vert \nabla v \vert^2 + \frac{\beta}4 \vert  v \vert^2 + u \log u - u +1 \right)~dx,
\end{align*}
where the last integral is clearly nonnegative. Thus, boundedness of the entropy functional implies an upper bound on the $H^1$-norm of $v$ and on the logarithmic entropy of $u$ for every finite time. This holds true even in the supercritical case, hence if blow-up occurs, then it has to be postponed to the infinite time limit.

\section{Global existence of weak solutions} \label{sec:globalexistence}

In the following we prove the existence of global solutions (Theorem~\ref{thm:globalexistence}) by an appropriate approximation procedure. Since we have several \emph{a priori} estimates based on the entropy dissipation and the structure of the system ($\Hdual$ estimates in the first equation), it is important to keep these basic structures in the approximation. To this end, for $\nu\in (0,1)$, we introduce the system
\begin{subequations}\label{sys:approx}
\begin{align}
\partial_t u &= \Delta (e^{-L_\nu v} u), & \text{in } (0,\infty)\times \Omega, \label{approx1} \\
\partial_t v &= \epsilon \Delta v - \beta v + L_\nu   u, & \text{in } (0,\infty)\times \Omega, \label{approx2}
\end{align}
together with no-flux boundary conditions
\begin{align}
\pa_\nu [e^{-L_\nu v} u] = \pa_\nu v =0, \qquad \text{ on } (0,\infty)\times \pa \Omega, \label{approxNeumann}
\end{align}
\end{subequations}
and initial data $(u_0,v_0)$. Here, $L_\nu=\Lambda_\nu^d$ (with $d$ the spatial dimension) and $\Lambda_\nu= (I-\nu \Delta)^{-1}$ is the positivity-preserving self-adjoint (positive definite) solution operator for the homogeneous Neumann problem with small diffusion coefficient $\nu$, defined from $\Hdual(\Omega)$ onto $H^1(\Omega)$. Note that with our regularity conditions on the domain $\Omega$, $\Lambda_\nu$ also maps continuously $H^m(\Omega)$ into $H^{m+2}(\Omega)$ for all $m\in \{0,\dots,2d-1\}$. As a consequence, $L_\nu$ maps continuously $\Hdual(\Omega)$ into $H^{2d-1}(\Omega) \hookrightarrow C(\overline\Omega)$. In the limit $\nu \rightarrow 0$, $L_\nu\to I$ and we expect to recover the original system.

The approximate system \eqref{sys:approx} preserves the gradient structure but only mollifies the entropy to
\begin{equation}\label{def:E_nu}
{\cal E}_\nu(u,v) =  \int_\Omega ( \frac{\epsilon}2  \vert \nabla v \vert^2  +  \frac{\beta}2 \vert  v \vert^2 -  (L_\nu u) v + u \log u - u +1)~dx.
\end{equation} 
Therefore we can derive rigorously at the level of the approximate system some uniform-in-$\nu$ estimates which are reminiscent of the previously described \emph{a priori} estimates for the original system.
The basic idea of the proof is to first establish the existence of solutions for \eqref{sys:approx} via a fixed-point technique and then use uniform-in-$\nu$ estimates to pass to the limit $\nu \rightarrow 0$. 

\subsection{Existence for the approximate system}

Our first objective is to establish the following existence result for the approximate problem~\ref{sys:approx}:
\begin{thm}\label{thm:ptfx}
Let $T>0$. Let $u_0 \in L^2(\Omega)$, $v_0 \in H^1(\Omega)$ both be nonnegative with $m:=\overline {u_0} >0$. Then, there exists a weak solution 
\begin{align*}
u_\nu &\in L^2(0,T;H^1(\Omega)) \cap L^\infty(0,T;L^2(\Omega)) \cap H^1(0,T;\Hdual(\Omega)), \\
v_\nu &\in C([0,T];H^1(\Omega)) \cap C^1([0,T];\Hdual(\Omega))
\end{align*}
of \eqref{sys:approx} with initial condition $u_0,v_0$. Moreover, $u$ and $v$ are nonnegative almost everywhere on $(0,T)\times\Omega$ and, for all $t\in [0,T]$,
\begin{equation*}
\overline {u_\nu} (t) = m,  \qquad  \Vert u_\nu(t) -m \Vert_{\Hdual(\Omega)} \leq \sqrt{\Vert u_0 - m\Vert_{\Hdual(\Omega)}^2 + 2\,m^2T}.
\end{equation*}
\end{thm}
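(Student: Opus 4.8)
The plan is to construct $(u_\nu,v_\nu)$ by a Schauder fixed point argument that exploits the mollification $L_\nu$ to decouple the two equations in a favourable way. Fix $\nu\in(0,1)$ and set $X:=C([0,T];L^2(\Omega))$. I would define a map $\Phi$ on the nonnegative cone of $X$ in two steps: given $\tilde v\ge 0$, form the coefficient $a:=e^{-L_\nu\tilde v}$, solve the linear problem $\partial_t u=\Delta(au)$ with $\partial_\nu(au)=0$ and $u(0)=u_0$ to obtain $u=u[\tilde v]$, and then solve the linear parabolic equation $\partial_t v=\epsilon\Delta v-\beta v+L_\nu u$ with $v(0)=v_0$ to obtain $v=:\Phi(\tilde v)$. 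A fixed point of $\Phi$ is exactly a weak solution of \eqref{sys:approx}, so the whole task is to exhibit a convex set on which $\Phi$ is a continuous compact self-map.

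For the first step, since $L_\nu=\Lambda_\nu^d$ maps $L^2(\Omega)$ continuously into $H^{2d}(\Omega)\hookrightarrow C^1(\overline\Omega)$, the coefficient $a\in C([0,T];C^1(\overline\Omega))$ is uniformly elliptic, $0<a_{\min}\le a\le 1$ (the upper bound because $\tilde v\ge 0$), with bounded gradient. Rewritten as $\partial_t u=\nabla\cdot(a\nabla u+u\nabla a)$, the first equation is a standard linear parabolic equation whose bilinear form is coercive modulo a lower-order term, and I would solve it by a Galerkin/Lions scheme, obtaining $u\in L^2(0,T;H^1(\Omega))\cap C([0,T];L^2(\Omega))\cap H^1(0,T;\Hdual(\Omega))$. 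Integrating the equation and using the no-flux condition yields mass conservation $\overline u(t)=m$, while testing with the negative part $u_-:=\min\{u,0\}$ and absorbing the drift into the diffusion gives $u\ge 0$ by Gronwall. In the second step $L_\nu u\ge 0$ is a smooth source, so the linear heat equation yields $v\in C([0,T];H^1(\Omega))\cap C^1([0,T];\Hdual(\Omega))$ with $v\ge 0$ by the maximum principle; hence $\Phi$ preserves nonnegativity.

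The key structural point that makes the self-map close is that the duality estimate is insensitive to the size of $\tilde v$. Testing the first equation with $K(u-m)$ and using only $u\ge 0$ and $a\le 1$ (hence $\overline{au}\le m$) reproduces \eqref{eq:dualityestimate}, namely $\|u(t)-m\|_{\Hdual(\Omega)}^2\le\|u_0-m\|_{\Hdual(\Omega)}^2+2m^2t=:D^2$, uniformly over all nonnegative $\tilde v$. Since $L_\nu$ maps $\Hdual(\Omega)$ continuously into $C(\overline\Omega)$ and $L_\nu m=m$, the source obeys $\|L_\nu u\|_{L^\infty(0,T;C(\overline\Omega))}\le m+C_\nu D=:F$, again independently of $\tilde v$; feeding $F$ into the linear estimates for the $v$-equation gives $\|\Phi(\tilde v)\|_{L^\infty(0,T;H^1(\Omega))}\le G$ with $G$ depending only on the data, $\nu$ and $T$. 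I therefore take $\mathcal S:=\{\tilde v\in X:\tilde v\ge 0,\ \|\tilde v\|_{L^\infty(0,T;H^1(\Omega))}\le G\}$, a closed convex subset of $X$, and conclude $\Phi(\mathcal S)\subseteq\mathcal S$ with no circularity. Compactness of $\Phi(\mathcal S)$ in $X$ follows from the further uniform bound in $H^1(0,T;L^2(\Omega))$ together with the compact embedding $H^1(\Omega)\hookrightarrow L^2(\Omega)$ and the Aubin--Lions--Simon lemma. For continuity, if $\tilde v_n\to\tilde v$ in $X$ then $a_n\to a$ and $\nabla a_n\to\nabla a$ uniformly (by $L_\nu:L^2(\Omega)\to C^1(\overline\Omega)$), the coefficients stay uniformly elliptic and Lipschitz on $\mathcal S$, so the $u_n$ are bounded in $L^2(0,T;H^1(\Omega))\cap H^1(0,T;\Hdual(\Omega))$ and converge strongly in $L^2((0,T)\times\Omega)$, which suffices to pass to the limit and then to identify $\Phi(\tilde v_n)\to\Phi(\tilde v)$. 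Schauder's theorem then yields a fixed point, and the claimed bounds are exactly those established above.

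I expect the hardest part to be the self-map property itself. The naive energy estimate for the first equation degrades like $1/a_{\min}=e^{\|L_\nu\tilde v\|_\infty}$, so one cannot close a bound for $v$ on a fixed interval $[0,T]$ through the $L^2$ theory alone; the resolution is precisely to propagate the $\tilde v$-independent $\Hdual$ bound $D$ through the smoothing operator, as this is the one estimate that survives the possible degeneracy of the coefficient. A secondary technical difficulty is the continuous dependence of the first equation on its coefficient $a$, for which the uniform ellipticity and Lipschitz bounds available on $\mathcal S$ are essential.
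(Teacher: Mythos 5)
Your proof is correct and rests on the same two pillars as the paper's: a Schauder fixed point for the mollified, decoupled system, and the observation that the duality estimate \eqref{eq:dualityestimate} is uniform over the nonnegative cone because it only uses $u\ge 0$ and $0\le e^{-L_\nu \tilde v}\le 1$. The difference is the direction of the iteration. The paper freezes $\tilde u$, solves first for $v$ and then for $u$, and takes the fixed point in the $u$-variable on the set ${\cal M}^+_R$ of nonnegative, mean-$m$ elements of $C([0,T];\Hdual(\Omega))$ with $\|u-m\|_{\Hdual(\Omega)}\le R$; the invariant set is then literally the ball produced by the duality estimate, so the self-map property is immediate, compactness comes from the compact embedding of $L^2(\Omega)$ into $\Hdual(\Omega)$ via Aubin--Lions--Simon, and continuity together with the nonnegativity of $u$ is delegated to the linear theory of \cite{aronson1968non}. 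You instead freeze $\tilde v$, solve first for $u$ and then for $v$, and take the fixed point in the $v$-variable on an $H^1$-ball of $C([0,T];L^2(\Omega))$; this costs one extra transfer step (duality bound on $u$, then an $L^\infty$ bound on $L_\nu u$ by the smoothing of $L_\nu$, then an $H^1$ bound on $v$) but is otherwise equivalent, and your diagnosis of where a naive $L^2$ energy estimate would become circular is precisely the point that the paper's choice of ${\cal M}^+_R$ is designed to bypass. Two items deserve care in your write-up: the nonnegativity of $u[\tilde v]$ must be secured \emph{before} the duality estimate is run, since the inequality $\overline{e^{-L_\nu\tilde v}u}\le m$ uses $u\ge 0$ (you address this via the negative-part test); and the continuous dependence of $u$ on the coefficient $a$, for which the paper invokes the a priori bounds of \cite{aronson1968non} --- both are available thanks to the uniform ellipticity and $C^1$ regularity of $a$ on your invariant set.
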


The proof is based on an application of Schauder's fixed point theorem and a comparison principle to establish the additional nonnegativity. We consider the fixed-point map 
\begin{equation*}
{\cal F} = {\cal I} \circ {\cal G},
\end{equation*} 
with 
\begin{align*}
{\cal G}:   C([0,T];\Hdual(\Omega)) &\rightarrow L^2(0,T;H^1(\Omega))\cap L^\infty(0,T;L^2(\Omega)) \cap H^1(0,T;\Hdual(\Omega)) \\ 
\tilde u  &\mapsto u,
\end{align*} 
where $(u,v)$ solves
\begin{align}
\partial_t u &= \Delta (e^{-L_\nu {v}} u ) = \nabla \cdot(  e^{-L_\nu {v}}  \nabla u - u e^{-L_\nu {v}} \nabla L_\nu v), \label{eq:ptfx1} \\
\partial_t v &= \epsilon \Delta v - \beta v + L_\nu  {\tilde u} \label{eq:ptfx2}
\end{align}
with no-flux boundary conditions and initial condition $(u_0,v_0)$. The operator ${\cal I}$ is simply the embedding operator
 \begin{equation*}
{\cal I}: L^2(0,T;H^1(\Omega))\cap  L^\infty(0,T;L^2(\Omega)) \cap H^1(0,T;\Hdual(\Omega)) \rightarrow C([0,T];\Hdual(\Omega)).
\end{equation*}
  
\begin{lemma}\label{lem:calG}
Let $u_0 \in L^2(\Omega)$ be nonnegative, $v_0 \in H^1(\Omega)$. Then 
the  operator ${\cal G}$ is well-defined and continuous. Moreover, the solution $u$ is nonnegative almost everywhere in $(0,T)\times\Omega$ and the same holds for $v$ if $v_0$ and $\tilde u$ are nonnegative. 
\end{lemma}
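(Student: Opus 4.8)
The plan is to analyze the map $\mathcal{G}$ by decoupling the two equations \eqref{eq:ptfx2} and \eqref{eq:ptfx1}, since in this fixed-point setup $v$ is driven by the given datum $\tilde u$ (not by $u$), while $u$ solves a linear parabolic equation whose coefficients depend on $v$. First I would solve \eqref{eq:ptfx2}: given $\tilde u \in C([0,T];\Hdual(\Omega))$, the term $L_\nu \tilde u$ lies in $C([0,T];H^{2d-1}(\Omega)) \hookrightarrow C([0,T];C(\overline\Omega))$ by the smoothing property of $L_\nu = \Lambda_\nu^d$ noted before the lemma, so the linear heat equation with source and Neumann data has a unique solution $v \in C([0,T];H^1(\Omega)) \cap L^2(0,T;H^2(\Omega))$ with $\partial_t v \in L^2(0,T;L^2(\Omega))$ by standard maximal-regularity / semigroup theory. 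Crucially, $L_\nu v \in C([0,T];C(\overline\Omega))$ and $\nabla L_\nu v \in C([0,T];C(\overline\Omega))$ are bounded and continuous, so the coefficient $e^{-L_\nu v}$ is bounded from above and below by positive constants and has a bounded spatial gradient: this is exactly the uniform ellipticity needed to treat \eqref{eq:ptfx1} as a nondegenerate linear parabolic equation.

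Next I would solve \eqref{eq:ptfx1} for $u$ with these fixed coefficients. Writing it in divergence form $\partial_t u = \nabla\cdot(a\nabla u - u\, b)$ with $a = e^{-L_\nu v}$ bounded above and below away from zero and $b = e^{-L_\nu v}\nabla L_\nu v \in L^\infty$, the existence and uniqueness of a weak solution $u \in L^2(0,T;H^1(\Omega)) \cap L^\infty(0,T;L^2(\Omega)) \cap H^1(0,T;\Hdual(\Omega))$ with initial datum $u_0 \in L^2(\Omega)$ follows from the Lions--Lax--Milgram / Galerkin theory for linear parabolic equations. The energy estimate obtained by testing with $u$ (and using the $L^\infty$ bounds on $a,b$ together with Young's inequality to absorb the drift) gives the $L^\infty(L^2)$ and $L^2(H^1)$ bounds, after which \eqref{eq:ptfx1} itself yields the $H^1(0,T;\Hdual)$ bound on $\partial_t u$.

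For continuity of $\mathcal{G}$, I would argue by sequential continuity: take $\tilde u_n \to \tilde u$ in $C([0,T];\Hdual(\Omega))$, deduce $L_\nu \tilde u_n \to L_\nu \tilde u$ in $C([0,T];C(\overline\Omega))$ by continuity of $L_\nu$, hence $v_n \to v$ in the appropriate solution space by linear stability of the $v$-equation, and in particular $e^{-L_\nu v_n} \to e^{-L_\nu v}$ and $\nabla L_\nu v_n \to \nabla L_\nu v$ uniformly. Since the coefficients of the $u$-equation converge uniformly while staying uniformly elliptic, the uniform energy bounds give weak compactness of $(u_n)$ and, combined with an Aubin--Lions argument and uniqueness of the limit, strong convergence $u_n \to u$ in $L^2(0,T;L^2(\Omega))$; passing to the limit in the weak formulation identifies the limit as $\mathcal{G}(\tilde u)$, which upgrades to the stated continuity.

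Finally, nonnegativity of $u$ (and of $v$ when $v_0,\tilde u \ge 0$) follows from the comparison principle for these linear parabolic equations. For $v$ this is immediate since the source $L_\nu \tilde u$ is nonnegative (as $\Lambda_\nu$, hence $L_\nu$, is positivity-preserving) and $v_0 \ge 0$. For $u$ I would test \eqref{eq:ptfx1} with the negative part $u^- = \max\{-u,0\}$ (or invoke the maximum principle directly), using the conservative divergence structure so that the drift term contributes no sign-breaking boundary or zeroth-order term, to conclude $u^- \equiv 0$. The main obstacle I anticipate is the continuity argument rather than existence: one must ensure the convergence of the $v$-dependent coefficients is strong enough to pass to the limit in the nonlinear-in-$v$ (but linear-in-$u$) flux $u\, e^{-L_\nu v}\nabla L_\nu v$, which is precisely where the strong smoothing of $L_\nu$ into $C(\overline\Omega)$ is essential and must be exploited carefully.
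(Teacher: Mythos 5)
Your proposal matches the paper's proof in all essentials: solve the linear $v$-equation first, use the smoothing of $L_\nu$ to obtain bounded, uniformly elliptic coefficients with bounded gradients for the $u$-equation, invoke standard nondegenerate linear parabolic theory (the paper cites Aronson) for existence, uniqueness and nonnegativity of $u$, and use positivity preservation of $L_\nu$ together with the comparison principle for $v$. The only cosmetic difference is the continuity argument, where the paper uses a direct difference/energy estimate in terms of the coefficients rather than your compactness-plus-uniqueness-of-the-limit argument; both are standard and adequate here.
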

\begin{proof}
Let $\tilde u \in C([0,T];\Hdual(\Omega))$. First of all, $L_\nu  {\tilde u} \in C([0,T];H^{2d-1}(\Omega))$. Since $v_0\in H^1(\Omega)$, classical solvability of Eq. \eqref{eq:ptfx2} yields a unique solution $v$ in $C([0,T];H^1(\Omega))\cap H^1(0,T;L^2(\Omega))$. By \eqref{eq:ptfx2}, $\partial_t v \in C([0,T];\Hdual(\Omega))$, so that
$$v \in C([0,T];H^1(\Omega)) \cap C^1([0,T];\Hdual(\Omega)).$$
Moreover, if $\tilde u$ and $v_0$ are nonnegative, then so are $L_\nu \tilde u$ and $v$ by the comparison principle.

Next, $L_\nu v \in C([0,T];H^{1+2d}(\Omega))$ and thus $L_\nu v \in C([0,T];W^{1,\infty}(\Omega))$. Consequently, $e^{-L_\nu v}$ is an element of $L^\infty((0,T)\times\Omega)$ which is positive and bounded away from zero, while $\nabla e^{-L_\nu v}=- e^{-L_\nu v} \nabla L_\nu v$ is also an element of $L^\infty((0,T)\times\Omega)$. Thanks to these properties and the nonnegativity of $u_0$, the existence and uniqueness of a nonnegative weak solution
$$u \in L^2(0,T;H^1(\Omega))\cap L^\infty(0,T;L^2(\Omega)) \cap H^1(0,T;\Hdual(\Omega))$$
of Eq. \eqref{eq:ptfx1} follow from \cite{aronson1968non}.

The continuous dependence of $u$ on $L_\nu v $ and hence on $v$ follows from a standard difference technique using the bounds of solutions in terms of the coefficients in \cite{aronson1968non}.
\end{proof}

Using the compact embedding of $L^2(\Omega)$ in $\Hdual(\Omega)$, an application of a version of the Aubin-Lions-Simon lemma (cf. \cite[Corollary~4]{simon1986compact}) further yields:
\begin{lemma}\label{lem:calI}
The embedding operator ${\cal I}$ is compact. 
\end{lemma}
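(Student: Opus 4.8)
The plan is to deduce the compactness of $\mathcal{I}$ directly from a version of the Aubin–Lions–Simon lemma tailored to the target space $C([0,T];\Hdual(\Om))$, exactly as indicated by the reference to \cite[Corollary~4]{simon1986compact}. Concretely, I would set up the triple of Banach spaces $X := L^2(\Om)$, $B := \Hdual(\Om)$ and $Y := \Hdual(\Om)$, so that the domain of $\mathcal{I}$ embeds into $\{f \in L^\infty(0,T;X) : \partial_t f \in L^2(0,T;Y)\}$, and then show that any set bounded in the domain of $\mathcal{I}$ is relatively compact in $C([0,T];B)$.

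The two embedding hypotheses needed for Simon's result are (i) $X \embedded B$ compactly and (ii) $B \embedded Y$ continuously. Condition (ii) is trivial here, since $B = Y = \Hdual(\Om)$. For condition (i), I would invoke the Rellich–Kondrachov theorem, which gives the compact embedding $H^1(\Om) \embedded L^2(\Om)$ on the bounded smooth domain $\Om$; dualizing with $L^2(\Om)$ as pivot space then yields the compact embedding $L^2(\Om) = (L^2(\Om))' \embedded (H^1(\Om))' = \Hdual(\Om)$, which is precisely the compact embedding quoted just before the statement.

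It then remains to check the two boundedness conditions on a set $F$ that is bounded in the domain of $\mathcal{I}$. First, such $F$ is by definition bounded in $L^\infty(0,T;L^2(\Om)) = L^\infty(0,T;X)$. Second, membership in $H^1(0,T;\Hdual(\Om))$ means in particular that $\{\partial_t f : f \in F\}$ is bounded in $L^2(0,T;\Hdual(\Om)) = L^2(0,T;Y)$, and the exponent $2 > 1$ meets the requirement of the $C([0,T];B)$ version of the lemma. With these hypotheses in force, \cite[Corollary~4]{simon1986compact} delivers the relative compactness of $F$ in $C([0,T];\Hdual(\Om))$, which is exactly the statement that $\mathcal{I}$ maps bounded sets to relatively compact sets, i.e. that $\mathcal{I}$ is compact.

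The only genuinely delicate point—and the step I would state carefully rather than leave to the reader—is that we claim compactness in the space of continuous-in-time functions, not merely in some $L^p(0,T;B)$. What underlies this is the uniform equicontinuity estimate $\|f(t_1) - f(t_2)\|_{\Hdual(\Om)} \leq |t_1 - t_2|^{1/2}\, \|\partial_t f\|_{L^2(0,T;\Hdual(\Om))}$, which together with the pointwise relative compactness furnished by the compact embedding $X \embedded B$ yields, through an Ehrling-type interpolation argument internal to Simon's corollary, the uniform equicontinuity in $B$ needed for the Arzelà–Ascoli characterization of compact subsets of $C([0,T];B)$. Since the hypotheses of \cite[Corollary~4]{simon1986compact} are met verbatim, no new argument is required beyond the verification just described.
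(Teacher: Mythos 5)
Your proof is correct and follows exactly the route the paper takes: the paper likewise deduces the compactness of ${\cal I}$ from the compact embedding of $L^2(\Omega)$ into $\Hdual(\Omega)$ together with \cite[Corollary~4]{simon1986compact}, applied with the $L^\infty(0,T;L^2(\Omega))$ bound and the $L^2(0,T;\Hdual(\Omega))$ bound on the time derivative. Your write-up simply makes explicit the verification of Simon's hypotheses that the paper leaves to the reader.
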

 
As a consequence of the two above lemmas, we see that the operator ${\cal F}$ is compact and continuous from $C([0,T];\Hdual(\Omega))$ into itself. In order to apply Schauder's fixed point theorem it remains to verify that it maps a closed bounded convex set into itself, which is the purpose of the following lemma. 
\begin{lemma} \label{selfmappinglemma}
Let $u_0 \in L^2(\Omega)$ be nonnegative with $m=\overline {u_0}$ and let $v_0 \in H^1(\Omega)$ be nonnegative.
For $R\ge m>0$, we introduce the bounded, closed, and convex sets
$$ {\cal M}_R: = \{ u \in C([0,T];\Hdual(\Omega))~|~ \|u(t)-m\|_{\Hdual(\Omega)}\le R \text{ and } \overline u(t) = m , \text{ for }t \in [0,T] \} $$
and
$$ {\cal M}^+_R: = \{ u \in  {\cal M}_R~|~ u(t) \ge 0 \text { in } {\cal D}'(\Omega) , \text{ for }t \in [0,T] \}. $$

Then for $R\geq m$ sufficiently large, the operator ${\cal F}$ maps ${\cal M}^+_R$ into itself.
\end{lemma}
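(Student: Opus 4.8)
The plan is to fix an arbitrary $\tilde u \in \mathcal{M}^+_R$, set $u = \mathcal{G}(\tilde u)$ together with the associated $v$ solving \eqref{eq:ptfx2}, and check that $\mathcal{F}(\tilde u) = \mathcal{I}(u) = u$ satisfies the three defining constraints of $\mathcal{M}^+_R$: nonnegativity, the mass constraint $\overline u(t) = m$, and the $\Hdual$-bound $\|u(t)-m\|_{\Hdual(\Omega)} \le R$ for all $t\in[0,T]$. The nonnegativity is immediate from Lemma~\ref{lem:calG}: since $\tilde u \ge 0$ and $v_0\ge 0$, the associated $v$ is nonnegative, and then the solution $u$ of \eqref{eq:ptfx1} is nonnegative almost everywhere, hence in $\mathcal{D}'(\Omega)$. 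For the mass constraint I would test \eqref{eq:ptfx1} with the constant $\mathbf{1}\in H^1(\Omega)$; the no-flux boundary condition gives $\langle \partial_t u, \mathbf{1}\rangle = -\int_\Omega \nabla(e^{-L_\nu v}u)\cdot\nabla\mathbf{1}\,dx = 0$, so that $\overline u(t) = \overline{u_0} = m$ for all $t$.

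The heart of the matter is the $\Hdual$-bound, obtained by the duality estimate already sketched formally in \eqref{eq:dualityestimate}, now carried out at the level of the approximate equation. Since $u \in H^1(0,T;\Hdual(\Omega)) \cap L^\infty(0,T;L^2(\Omega))$ and $K(u-m) \in L^\infty(0,T;H^1(\Omega))$ (because $u-m \in L^\infty(0,T;L^2_\diamond(\Omega))$ and $K$ maps $L^2_\diamond(\Omega)$ into $H^2_\diamond(\Omega)$), the map $t\mapsto \|u(t)-m\|_{\Hdual(\Omega)}^2 = \langle u(t)-m, K(u(t)-m)\rangle$ is absolutely continuous, and the standard chain rule for the Gelfand triple $H^1_\diamond \hookrightarrow L^2_\diamond \hookrightarrow \Hdualm$, together with $\Delta K(u-m) = -(u-m)$, yields
\begin{equation*}
\frac12 \frac{d}{dt}\|u-m\|_{\Hdual(\Omega)}^2 = \langle \partial_t u, K(u-m)\rangle = -\int_\Omega e^{-L_\nu v}\,u\,(u-m)\,dx = -\int_\Omega e^{-L_\nu v}\,u^2\,dx + m\int_\Omega e^{-L_\nu v}\,u\,dx.
\end{equation*}
Here the nonnegativity of $u$ and $v$ enters twice: $e^{-L_\nu v}\le 1$ (since $L_\nu v\ge 0$, as $L_\nu$ is positivity-preserving), whence $\int_\Omega e^{-L_\nu v}u\,dx \le \int_\Omega u\,dx = m$, and the first term on the right is nonpositive. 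Discarding it gives $\frac{d}{dt}\|u-m\|_{\Hdual(\Omega)}^2 \le 2m^2$.

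Integrating in time then provides $\|u(t)-m\|_{\Hdual(\Omega)}^2 \le \|u_0-m\|_{\Hdual(\Omega)}^2 + 2m^2 t \le \|u_0-m\|_{\Hdual(\Omega)}^2 + 2m^2 T$ for every $t\in[0,T]$. The decisive feature is that the right-hand side depends neither on $\tilde u$ nor on $R$: choosing $R := \max\{m,\,(\|u_0-m\|_{\Hdual(\Omega)}^2 + 2m^2 T)^{1/2}\}$ therefore makes the closed, bounded, convex set $\mathcal{M}^+_R$ invariant under $\mathcal{F}$, which is precisely the claimed self-mapping property (and reproduces the bound stated in Theorem~\ref{thm:ptfx}). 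I do not expect a genuine obstacle here; the only points requiring care are the rigorous justification of the differentiation of $t\mapsto\|u(t)-m\|_{\Hdual(\Omega)}^2$ (which rests on the regularity supplied by Lemma~\ref{lem:calG} and a standard Lions-type lemma) and the sign bookkeeping in the duality identity. The key structural fact to emphasize is that this estimate is \emph{uniform} in the input $\tilde u$, which is exactly what renders the fixed-point set invariant and thus allows Schauder's theorem to be applied.
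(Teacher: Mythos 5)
Your proposal is correct and follows essentially the same route as the paper: nonnegativity from Lemma~\ref{lem:calG}, mass conservation from the divergence form with no-flux boundary conditions, and the $\Hdual$-bound by rewriting \eqref{eq:ptfx1} as $K\partial_t u = -e^{-L_\nu v}u + \overline{e^{-L_\nu v}u}$ and repeating the duality computation of \eqref{eq:dualityestimate}, with the nonnegativity of $L_\nu v$ giving $e^{-L_\nu v}\le 1$. The choice of $R$ and the observation that the bound is uniform in $\tilde u$ match the paper's conclusion exactly.
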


\begin{proof}
Let $\tilde u \in {\cal M}^+_R$ for some $R\ge m$ to be specified later. We show that $u={\cal F}(\tilde u)$ belongs to ${\cal M}^+_R$. Since the solution $u$ of \eqref{eq:ptfx1} is nonnegative, and since Equation~\eqref{eq:ptfx1} is of divergence form with no-flux boundary conditions, we readily obtain
$$ \overline{u}(t) = m, \text{ for all }t \in [0,T].   $$
Owing to the regularity of $u$ and $v$ established in the proof of Lemma~\ref{lem:calG}, Equation~\eqref{eq:ptfx1} may be equivalently written
$$ K \partial_t u = - e^{- L_\nu v} u + \overline{e^{- L_\nu v} u},$$
so that, using the nonnegativity of $L_\nu v$, we may proceed as in the derivation of the estimate \eqref{eq:dualityestimate} and obtain
$$ \Vert u -m \Vert_{C([0,T];\Hdual(\Omega))} \leq \sqrt{\Vert u_0 - m\Vert_{\Hdual(\Omega)}^2 + 2\, m^2T }. $$
Thus, $u$ lies in ${\cal M}^+_R$ provided 
$$ R \geq \sqrt{\Vert u_0 - m\Vert_{\Hdual(\Omega)}^2 + 2\, m^2T }, $$
and the proof is complete.
\end{proof}

We are now able to conclude the proof of the existence theorem:

\begin{proof}[Proof of Theorem~\ref{thm:ptfx}]
The proof directly follows from the application of Schauder's fixed point theorem to the operator $\cal F$ which is possible thanks to Lemmas \ref{lem:calG}, \ref{lem:calI} and \ref{selfmappinglemma}.
\end{proof}

\subsection{Uniform estimates for the approximate system}

In the following we collect the uniform-in-$\nu$ estimates we can get for the approximate solutions $(u_\nu,v_\nu)$ of \eqref{sys:approx}. They will be helpful to pass to the limit $\nu\rightarrow 0$ in the next subsection.

\begin{prop}\label{prop:unif_est}
Let $T>0$. Let $u_0 \in L^2(\Omega)$, $v_0 \in H^1(\Omega)$ both be nonnegative, and let $(u_\nu,v_\nu)$ be a weak solution of \eqref{sys:approx} given by Theorem \ref{thm:ptfx}. Then, $(u_\nu,v_\nu)$ satisfies the following estimates:
\begin{align} \label{apriori1}
\Vert u_\nu(t) \Vert_{L^1(\Omega)}  & = m, \qquad t\in [0,T],\\
\label{apriori2}
 \Vert u_\nu \Vert_{L^\infty(0,T;\Hdual(\Omega))} & \leq C(T),\\
\label{apriori3}
\sup_{t \in [0,T]}\int u_\nu(t) \log \frac{u_\nu(t)}m ~dx & \leq C(T),\\
\Vert v_\nu \Vert_{L^\infty(0,T;H^1(\Omega))} & \leq C(T), \label{apriori3.5}\\
\label{apriori4}
\Vert -\epsilon \Delta v_\nu + \beta v_\nu - L_\nu u_\nu \Vert_{L^2((0,T)\times\Omega)} & \leq C(T),\\
\label{apriori5}
\Vert \partial_t v_\nu \Vert_{L^2((0,T)\times\Omega)} & \leq C(T),\\
 \label{apriori6b}
 \Vert e^{-L_\nu v_\nu} u_\nu \Vert_{L^2((0,T)\times\Omega)}& \leq C(T), \\
 \label{apriori6c}
\Vert K \partial_t u_\nu \Vert_{L^2((0,T)\times\Omega)} & \leq C(T),
\end{align} 
for some constant $C(T)$ depending only on the initial data (more precisely on $m$, $\Vert u_0\Vert_{\Hdual(\Omega)}$, $\int_\Omega u_0 \log (u_0/m)~dx$ and $\Vert v_0\Vert_{H^1(\Omega)}$) and time $T$, but independent of $\nu$.
\end{prop}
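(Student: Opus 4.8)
The plan is to render rigorous, at the level of the approximate system \eqref{sys:approx} and with constants independent of $\nu$, the three formal computations of Section~\ref{sec:structure}: the mass balance, the duality estimate, and the entropy dissipation. The whole argument rests on the fact that the mollified operator $L_\nu$ shares the structural properties used formally: it is self-adjoint, positivity-preserving, fixes the constants ($L_\nu {\bf 1}={\bf 1}$, hence it preserves the mean), and is a contraction for the $\Hdual(\Omega)$-norm on mean-zero functions, since $\Lambda_\nu=(I-\nu\Delta)^{-1}$ has spectral symbol $\le 1$. It is precisely these features that transfer uniformity in $\nu$ to all the bounds below.

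I would first dispatch the two cheap estimates. Estimate \eqref{apriori1} is immediate: integrating \eqref{approx1} over $\Omega$, using the no-flux condition \eqref{approxNeumann} and the nonnegativity of $u_\nu$ from Theorem~\ref{thm:ptfx}, gives $\Vert u_\nu(t)\Vert_{L^1}=\overline{u_\nu}(t)=m$. For \eqref{apriori2} I repeat the duality computation of \eqref{eq:dualityestimate}, testing the reformulation $K\partial_t u_\nu=-e^{-L_\nu v_\nu}u_\nu+\overline{e^{-L_\nu v_\nu}u_\nu}$ against $u_\nu-m$. Since $v_\nu\ge 0$ and $L_\nu$ is positivity-preserving, $L_\nu v_\nu\ge 0$ and thus $0<e^{-L_\nu v_\nu}\le 1$, whence $\overline{e^{-L_\nu v_\nu}u_\nu}\le\overline{u_\nu}=m$. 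This yields \eqref{apriori2} (the bound already recorded in Theorem~\ref{thm:ptfx}) and, in addition, the auxiliary space-time bound $\int_0^T\!\int_\Omega e^{-L_\nu v_\nu}u_\nu^2\,dx\,dt\le C(T)$, reused twice below.

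The core is the entropy estimate. As the approximation preserves the generalized gradient structure, differentiating ${\cal E}_\nu$ from \eqref{def:E_nu} along the flow should give
\[ \frac{d}{dt}{\cal E}_\nu(u_\nu,v_\nu) = -4\int_\Omega|\nabla\sqrt{e^{-L_\nu v_\nu}u_\nu}|^2\,dx - \int_\Omega|{-\epsilon\Delta v_\nu}+\beta v_\nu-L_\nu u_\nu|^2\,dx, \]
the $v$-term being $-\int_\Omega|\partial_t v_\nu|^2$ via \eqref{approx2}, and the $u$-term arising from $\partial_t u_\nu=\Delta e^{\partial_u{\cal E}_\nu}$ with $\partial_u{\cal E}_\nu=\log u_\nu-L_\nu v_\nu$, integration by parts (boundary term vanishing by \eqref{approxNeumann}), and $z|\nabla\log z|^2=4|\nabla\sqrt z|^2$ with $z=e^{-L_\nu v_\nu}u_\nu$. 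Integrating in time needs a lower bound for ${\cal E}_\nu(u_\nu(t),v_\nu(t))$, obtained as in Section~\ref{sec:entropy_on_Hdual}: split $-\int_\Omega(L_\nu u_\nu)v_\nu=-\int_\Omega L_\nu(u_\nu-m)v_\nu-m\,\overline{v_\nu}$, rewrite the first term as $-\int_\Omega\nabla KL_\nu(u_\nu-m)\cdot\nabla v_\nu$, and use $\Vert L_\nu(u_\nu-m)\Vert_{\Hdual}\le\Vert u_\nu-m\Vert_{\Hdual}$ with \eqref{apriori2} and Young's inequality to absorb $\frac{\epsilon}4|\nabla v_\nu|^2+\frac\beta4|v_\nu|^2$. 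Bounding ${\cal E}_\nu(u_0,v_0)$ uniformly in $\nu$ by the same splitting (it is controlled by ${\cal E}(u_0,v_0)$ plus $\Vert u_0-m\Vert_{\Hdual}\Vert v_0\Vert_{H^1}$) then delivers \eqref{apriori3}, \eqref{apriori3.5} and \eqref{apriori4} at once. The remaining bounds follow: \eqref{apriori5} coincides with \eqref{apriori4} as $\partial_t v_\nu=-(-\epsilon\Delta v_\nu+\beta v_\nu-L_\nu u_\nu)$; \eqref{apriori6b} follows from $e^{-L_\nu v_\nu}\le 1$ and the auxiliary bound of the previous step; and \eqref{apriori6c} follows from the reformulation of the first equation, the mean term being $\le m$ pointwise in time.

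The main obstacle is the rigorous justification of the entropy identity, not its algebra. With only $u_\nu\in L^2(0,T;H^1)\cap H^1(0,T;\Hdual)$ and $v_\nu\in C([0,T];H^1)\cap H^1(0,T;L^2)$ available, differentiating $t\mapsto\int_\Omega u_\nu\log u_\nu\,dx$ and integrating $\int_\Omega\log u_\nu\,\Delta(e^{-L_\nu v_\nu}u_\nu)$ by parts is delicate because of the singularity of $\log$ at $0$. Here I would exploit that, for fixed $\nu$, Lemma~\ref{lem:calG} makes $e^{-L_\nu v_\nu}$ bounded and bounded away from zero, so that the $u_\nu$-equation is uniformly (non-degenerate) parabolic; this furnishes the regularity needed to test with a regularized logarithm $\log(u_\nu+\delta)$ and pass to the limit $\delta\to 0$, all resulting constants being controlled by quantities already bounded above and hence uniform in $\nu$.
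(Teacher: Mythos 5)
Your proposal is correct and follows essentially the same route as the paper: mass conservation and the duality/$\Hdualm$ estimate first, then the entropy dissipation inequality justified by testing with a regularized logarithm $\log(\delta+u_\nu)$ and letting $\delta\to 0$, then the lower bound on ${\cal E}_\nu$ obtained by splitting the cross term and absorbing it via the $\Hdual$ bound and the contraction property of $L_\nu$, from which \eqref{apriori3}--\eqref{apriori6c} all follow. The only cosmetic difference is that you place $L_\nu$ on $u_\nu-m$ and use its $\Hdual$-contractivity, whereas the paper moves $L_\nu$ onto $v_\nu$ by self-adjointness and uses its contractivity in $L^2$ and $H^1$; you also make explicit the uniform-in-$\nu$ bound on ${\cal E}_\nu(u_0,v_0)$, which the paper leaves implicit.
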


From Theorem~\ref{thm:ptfx} we already know that the solutions we obtained satisfy \eqref{apriori1} and \eqref{apriori2}. We furthermore have the following dual estimate.

\begin{lemma}
For almost every $t \in (0,T)$, we have
\begin{equation}\label{unif_duality}
\Vert u_\nu(t) -m \Vert_{\Hdual(\Omega)}^2 + 2 \int_0^t \int_\Omega e^{-L_\nu v_\nu} \, u_\nu^2~dx~ds \leq \Vert u_0 - m\Vert_{\Hdual(\Omega)}^2 + 2\,m^2t.
\end{equation}
\end{lemma}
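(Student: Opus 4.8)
The plan is to test the weak formulation of the first equation, \eqref{eq:weaku}, with $u_\nu - m$, exactly mirroring the formal duality computation \eqref{eq:dualityestimate} but now for the approximate system where $v$ is replaced by $L_\nu v_\nu$. The quantity $\langle u_\nu - m, K(u_\nu - m)\rangle$ is precisely $\Vert u_\nu - m\Vert_{\Hdual(\Omega)}^2$, so differentiating it in time and using $K\partial_t u_\nu = -e^{-L_\nu v_\nu}u_\nu + \overline{e^{-L_\nu v_\nu}u_\nu}$ (the reformulation of \eqref{approx1} noted in the proof of Lemma~\ref{selfmappinglemma}) should yield
\begin{equation*}
\frac12\frac{d}{dt}\Vert u_\nu - m\Vert_{\Hdual(\Omega)}^2 = -\int_\Omega e^{-L_\nu v_\nu}u_\nu^2~dx + m\,\overline{e^{-L_\nu v_\nu}u_\nu}.
\end{equation*}

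Next I would bound the last term. Since $v_\nu \ge 0$ and $L_\nu$ is positivity-preserving, $L_\nu v_\nu \ge 0$, hence $e^{-L_\nu v_\nu} \le 1$; together with $u_\nu \ge 0$ this gives $\overline{e^{-L_\nu v_\nu}u_\nu} = \int_\Omega e^{-L_\nu v_\nu}u_\nu~dx \le \int_\Omega u_\nu~dx = m$, so the cross term is bounded by $m^2$. This is the step where the nonnegativity of both $v_\nu$ and $u_\nu$ (guaranteed by Theorem~\ref{thm:ptfx}) and the positivity-preservation of $L_\nu$ are essential, and it is the analogue of the bound $\overline{e^{-v}u}\le m$ in the formal derivation. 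Integrating the resulting differential inequality
\begin{equation*}
\frac12\frac{d}{dt}\Vert u_\nu - m\Vert_{\Hdual(\Omega)}^2 + \int_\Omega e^{-L_\nu v_\nu}u_\nu^2~dx \le m^2
\end{equation*}
over $(0,t)$ and using that $K$ extends the initial datum $u_0 - m$ then delivers \eqref{unif_duality} after multiplying by two.

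**The main obstacle** is rigor rather than the algebra: I must justify that $t\mapsto\Vert u_\nu(t)-m\Vert_{\Hdual(\Omega)}^2$ is absolutely continuous with the claimed derivative, and that the pairing $\langle\partial_t u_\nu, K(u_\nu - m)\rangle$ is legitimate. This follows because $u_\nu \in H^1(0,T;\Hdual(\Omega))$ by Theorem~\ref{thm:ptfx}, so $\partial_t u_\nu \in L^2(0,T;\Hdual(\Omega))$, while $K(u_\nu - m) \in L^\infty(0,T;H^1_\diamond(\Omega))$; the self-adjointness of $K$ and the identity $\langle w, Kw\rangle = \Vert w\Vert_{\Hdual(\Omega)}^2$ for $w\in\Hdual(\Omega)$ with zero mean then convert the time derivative of the norm into the desired dual pairing, and the chain rule for $t\mapsto\langle u_\nu(t)-m, K(u_\nu(t)-m)\rangle$ holds in the standard Lions-Magenes sense. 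Since all these objects are genuinely $L^2$ in time (unlike the merely formal computation of \eqref{eq:dualityestimate}), the inequality holds for almost every $t$, which is precisely the statement.
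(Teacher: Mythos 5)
Your argument is correct and is essentially identical to the paper's own proof: the authors simply note that, thanks to the nonnegativity of $L_\nu v_\nu$, one can repeat the formal derivation of \eqref{eq:dualityestimate} verbatim for the approximate system, which is exactly the computation you carry out (testing with $u_\nu-m$, using $K\partial_t u_\nu = -e^{-L_\nu v_\nu}u_\nu + \overline{e^{-L_\nu v_\nu}u_\nu}$, bounding $\overline{e^{-L_\nu v_\nu}u_\nu}\le m$, and integrating in time). Your additional remarks on the legitimacy of the chain rule, justified by the regularity $u_\nu\in H^1(0,T;\Hdual(\Omega))$ from Theorem~\ref{thm:ptfx}, only make explicit what the paper leaves implicit.
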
 

\begin{proof}
Thanks to the nonnegativity of $L_\nu v_\nu$, we can proceed as in \eqref{eq:dualityestimate} and we obtain the result.
\end{proof}

To obtain the other bounds we rely on the entropy dissipation.
\begin{lemma}
Recall the definition of ${\cal E}_\nu$ in \eqref{def:E_nu}. Then for almost every $t \in (0,T)$, we have the estimate
\begin{equation}\label{unif_entropy}
\begin{split}
{\cal E}_\nu(u_\nu(t),v_\nu(t)) & + 4 \int_0^t \int_\Omega |\nabla \sqrt{e^{-L_\nu v_\nu}u_\nu}|^2~dx~ds \\
& +  \int_0^t \int_\Omega |\epsilon \Delta v_\nu + L_\nu u_\nu - \beta v_\nu|^2~dx~ds \leq {\cal E}_\nu(u_0,v_0).
\end{split}
\end{equation}
\end{lemma}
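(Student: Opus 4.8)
The plan is to differentiate $t\mapsto {\cal E}_\nu(u_\nu(t),v_\nu(t))$ along the approximate flow, identify the two dissipation terms, and then integrate in time; the passage from the resulting identity to \eqref{unif_entropy} is immediate since both dissipation integrands are nonnegative. Throughout I rely on the regularity furnished by Theorem~\ref{thm:ptfx}, together with the fact that $L_\nu v_\nu \in C([0,T];W^{1,\infty}(\Omega))$, so that $e^{-L_\nu v_\nu}$ is bounded above and away from zero on $[0,T]\times\Omega$, with $\nabla e^{-L_\nu v_\nu}=-e^{-L_\nu v_\nu}\nabla L_\nu v_\nu\in L^\infty$. I also observe at the outset that, since $\partial_t v_\nu$, $v_\nu$ and $L_\nu u_\nu$ all belong to $L^2((0,T)\times\Omega)$, Eq.~\eqref{approx2} upgrades the regularity of $v_\nu$ to $v_\nu\in L^2(0,T;H^2(\Omega))$, which legitimates the integrations by parts below.

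For the quadratic part in $v$, I would use the chain rule valid for $v_\nu\in H^1(0,T;L^2(\Omega))\cap L^2(0,T;H^2(\Omega))$ together with the Neumann condition to write
\[
\frac{d}{dt}\int_\Omega\Big(\frac{\epsilon}{2}|\nabla v_\nu|^2+\frac{\beta}{2}|v_\nu|^2\Big)\,dx=\int_\Omega(-\epsilon\Delta v_\nu+\beta v_\nu)\,\partial_t v_\nu\,dx .
\]
For the cross term, since $\partial_t u_\nu\in L^2(0,T;\Hdual(\Omega))$ and $L_\nu$ is self-adjoint and maps $\Hdual(\Omega)$ into $H^1(\Omega)$, the product rule gives $\frac{d}{dt}\int_\Omega (L_\nu u_\nu)v_\nu\,dx=\langle\partial_t u_\nu, L_\nu v_\nu\rangle+\int_\Omega (L_\nu u_\nu)\partial_t v_\nu\,dx$, where $L_\nu v_\nu\in H^1(\Omega)$. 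Collecting the two contributions that multiply $\partial_t v_\nu$ and inserting Eq.~\eqref{approx2} in the form $\partial_t v_\nu=-(-\epsilon\Delta v_\nu+\beta v_\nu-L_\nu u_\nu)$ produces exactly $-\int_\Omega|\epsilon\Delta v_\nu+L_\nu u_\nu-\beta v_\nu|^2\,dx$, the second dissipation term.

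The delicate part is the contribution of the logarithmic entropy combined with the remaining piece of the cross term, which formally equals $\langle\partial_t u_\nu,\log u_\nu-L_\nu v_\nu\rangle=\langle\partial_t u_\nu,\log(e^{-L_\nu v_\nu}u_\nu)\rangle$. To make this rigorous despite the possible vanishing of $u_\nu$ and the singularity of $\log$, I would replace $s\mapsto s\log s-s+1$ by a smooth convex approximation $h_\delta$ with Lipschitz derivative $h_\delta'(s)=\log(s+\delta)$; since $h_\delta'$ is Lipschitz, $h_\delta'(u_\nu)\in L^2(0,T;H^1(\Omega))$ with $\nabla h_\delta'(u_\nu)=\nabla u_\nu/(u_\nu+\delta)$, and the chain rule $\frac{d}{dt}\int_\Omega h_\delta(u_\nu)\,dx=\langle\partial_t u_\nu,h_\delta'(u_\nu)\rangle$ holds for $u_\nu\in L^2(0,T;H^1(\Omega))$ with $\partial_t u_\nu\in L^2(0,T;\Hdual(\Omega))$. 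Writing Eq.~\eqref{approx1} in the divergence form of \eqref{eq:ptfx1} and testing it against $h_\delta'(u_\nu)-L_\nu v_\nu\in H^1(\Omega)$, I integrate by parts using the no-flux condition \eqref{approxNeumann}. I then let $\delta\to 0$: the term $\int_\Omega e^{-L_\nu v_\nu}|\nabla u_\nu|^2/(u_\nu+\delta)\,dx$ increases to $\int_\Omega e^{-L_\nu v_\nu}|\nabla u_\nu|^2/u_\nu\,dx$ by monotone convergence, while the remaining cross terms converge by dominated convergence (using $\nabla u_\nu=0$ a.e. on $\{u_\nu=0\}$). With the identity $\nabla(e^{-L_\nu v_\nu}u_\nu)=e^{-L_\nu v_\nu}(\nabla u_\nu-u_\nu\nabla L_\nu v_\nu)$ and $|\nabla w|^2/w=4|\nabla\sqrt w|^2$ for $w=e^{-L_\nu v_\nu}u_\nu$, these contributions recombine into $-4\int_\Omega|\nabla\sqrt{e^{-L_\nu v_\nu}u_\nu}|^2\,dx$, the first dissipation term.

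The main obstacle is precisely this last step: controlling the logarithmic entropy in the degenerate region $\{u_\nu=0\}$, justifying the chain rule for the nonlinear convex functional, and passing to the limit $\delta\to0$, since the Fisher-information integral $\int_\Omega e^{-L_\nu v_\nu}|\nabla u_\nu|^2/u_\nu\,dx$ is only known to be finite a posteriori (it is exactly what the estimate produces). The monotone convergence in $\delta$ delivers the inequality in the correct direction, so that even without a priori finiteness of this integral one obtains \eqref{unif_entropy} after integration over $(0,t)$; the nonnegativity of the two dissipation terms then turns the resulting identity into the stated upper bound by ${\cal E}_\nu(u_0,v_0)$.
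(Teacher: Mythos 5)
Your proposal is correct and follows essentially the same route as the paper: regularize the logarithm by the shift $\delta$, test \eqref{approx1} with $\log(\delta+u_\nu)-L_\nu v_\nu$ and \eqref{approx2} with $-\epsilon\Delta v_\nu-L_\nu u_\nu+\beta v_\nu$, and pass to the limit $\delta\to 0$ to recover the Fisher-information term with the inequality in the right direction. The only cosmetic difference is in how that limit is organized — you use monotone convergence on $e^{-L_\nu v_\nu}|\nabla u_\nu|^2/(u_\nu+\delta)$ plus dominated convergence on the cross terms, whereas the paper groups the dissipation as $4|\nabla\sqrt{e^{-L_\nu v_\nu}(\delta+u_\nu)}|^2$ plus a vanishing remainder and invokes weak lower semicontinuity — but both arguments are sound and deliver the same bound.
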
 

\begin{proof}
First of all, we use the properties $v_0 \in H^1(\Omega)$ and $L_\nu u_\nu \in L^2((0,T)\times\Omega)$ to show that $\partial_t v_\nu$ and $\Delta v_\nu$ both lie in $L^2((0,T)\times\Omega)$. 
Now we can employ a standard proof to derive entropy dissipation estimates: for $\delta>0$, using 
$$ \log (\delta + u_\nu) - L_\nu v_\nu =  \log( (\delta + u_\nu )e^{-L_\nu v_\nu}) \in L^2(0,T;H^1(\Omega))$$ 
as test function in \eqref{approx1} and
$$ -\epsilon \Delta v_\nu - L_\nu u_\nu + \beta v_\nu \in L^2((0,T)\times\Omega) $$
in \eqref{approx2},
then summing the two equalities and integrating in time we get
\begin{equation}
\begin{split}
\int_\Omega \left( (\delta +u_\nu)\,(\log(\delta+u_\nu)-1)+1 + \frac \epsilon 2 |\nabla v_\nu|^2 + \frac \beta 2 |v_\nu|^2 - v_\nu \,L_\nu u_\nu \right) ~dx\\
+ \int_0^t \int_\Omega \frac{\nabla (e^{-L_\nu v_\nu}u_\nu)\cdot \nabla (e^{-L_\nu v_\nu}(\delta+u_\nu))}{e^{-L_\nu v_\nu}(\delta+u_\nu)}~dx~ds
+  \int_0^t \int_\Omega |\epsilon \Delta v_\nu + L_\nu u_\nu - \beta v_\nu|^2~dx~ds\\
= \int_\Omega \left( (\delta +u_0)\,(\log(\delta+u_0)-1)+1 + \frac \epsilon 2 |\nabla v_0|^2 + \frac \beta 2 |v_0|^2  - v_0 L_\nu u_0 \right) ~dx.
\end{split}\label{standard}
\end{equation}
Now, 
\begin{align*}
I_{\nu,\delta} & = \int_0^t \int_\Omega \frac{\nabla (e^{-L_\nu v_\nu}u_\nu)\cdot \nabla (e^{-L_\nu v_\nu}(\delta+u_\nu))}{e^{-L_\nu v_\nu}(\delta+u_\nu)}~dx~ds \\
& = 4 \int_0^t \int_\Omega \left| \nabla\sqrt{e^{-L_\nu v_\nu}(\delta+u_\nu)} \right|^2~dx~ds \\
& \quad + \int_0^t \int_\Omega \frac{\delta}{\delta+u_\nu} \nabla L_\nu v_\nu \cdot \nabla (e^{-L_\nu v_\nu}(\delta+u_\nu))~dx~ds.
\end{align*}
On the one hand, weak lower semicontinuity ensures that
\begin{equation*}
\liminf_{\delta\to 0} 4 \int_0^t \int_\Omega \left| \nabla\sqrt{e^{-L_\nu v_\nu}(\delta+u_\nu)} \right|^2~dx~ds \ge 4 \int_0^t \int_\Omega \left| \nabla\sqrt{u_\nu e^{-L_\nu v_\nu}} \right|^2~dx~ds.
\end{equation*}
On the other hand, 
\begin{align*}
& \int_0^t \int_\Omega \frac{\delta}{\delta+u_\nu} \nabla L_\nu v_\nu \cdot \nabla (e^{-L_\nu v_\nu}(\delta+u_\nu))~dx~ds \\
& = \int_0^t \int_\Omega \frac{\delta}{\delta+u_\nu}e^{-L_\nu v_\nu} \nabla L_\nu v_\nu \cdot \nabla u_\nu~dx~ds - \delta \int_0^t \int_\Omega e^{-L_\nu v_\nu} | \nabla L_\nu v_\nu |^2~dx~ds \\
& = \int_0^t \int_\Omega \frac{\delta \mathbf{1}_{(0,\infty)}(u_\nu)}{\delta+u_\nu}e^{-L_\nu v_\nu} \nabla L_\nu v_\nu \cdot \nabla u_\nu~dx~ds - \delta \int_0^t \int_\Omega e^{-L_\nu v_\nu} | \nabla L_\nu v_\nu |^2~dx~ds.
\end{align*}
Since $0 \le \delta \mathbf{1}_{(0,\infty)}(u_\nu)/(\delta+u_\nu)\le 1$ and converges to zero as $\delta \to 0$, we deduce with the help of Lebesgue's dominated convergence theorem and the $L^2(0,T;H^1(\Omega))$-regularity of $u_\nu$ and $v_\nu$ that
\begin{equation*}
\lim_{\delta\to 0} \int_0^t \int_\Omega \frac{\delta}{\delta+u_\nu} \nabla L_\nu v_\nu \cdot \nabla (e^{-L_\nu v_\nu}(\delta+u_\nu))~dx~ds = 0.
\end{equation*}
Consequently,
\begin{equation*}
\liminf_{\delta\to 0} I_{\nu,\delta} \ge 4 \int_0^t \int_\Omega \left| \nabla\sqrt{u_\nu e^{-L_\nu v_\nu}} \right|^2~dx~ds,
\end{equation*}
and we may let $\delta\to 0$ in \eqref{standard} to complete the proof.
\end{proof}

We can now obtain the remaining estimates, thanks to the properties of the entropy on $\Hdual$-bounded sets.

\begin{proof}[Proof of Proposition~\ref{prop:unif_est}]
Reasoning as in Section~\ref{sec:entropy_on_Hdual}, we can use estimate \eqref{apriori2} to estimate the entropy from below by
\begin{align} \label{entropy_on_Hdual_nu}
&{\cal E_\nu}(u_\nu,v_\nu) \nonumber \\
&=  \int_\Omega ( \frac{\epsilon}2  \vert \nabla v_\nu \vert^2  +  \frac{\beta}2 \vert  v_\nu \vert^2 - u_\nu L_\nu v_\nu + u_\nu \log u_\nu - u_\nu +1)~dx \nonumber \\
&\geq - \frac{C(T)^2}{\epsilon}- \frac{m^2}{\beta} + \int_\Omega ( \frac{\epsilon}2  \vert \nabla v_\nu \vert^2  - \frac{\epsilon}4  \vert \nabla L_\nu v_\nu \vert^2+ \frac{\beta}2 \vert  v_\nu \vert^2 - \frac{\beta}4 \vert  L_\nu v_\nu \vert^2 + u_\nu \log u_\nu - u_\nu +1)~dx \nonumber \\
&\geq - \frac{C(T)^2}{\epsilon}- \frac{m^2}{\beta} + \int_\Omega ( \frac{\epsilon}4  \vert \nabla v_\nu \vert^2 + \frac{\beta}4 \vert  v_\nu \vert^2 + u_\nu \log u_\nu - u_\nu +1)~dx,
\end{align}
where the last inequality comes from the fact that the operator $L_\nu$ is a contraction in $L^2(\Omega)$. Reinserting this last estimate in the entropy dissipation \eqref{unif_entropy} and observing that
\begin{equation*}
\int_\Omega \left( u_\nu \log u_\nu - u_\nu +1 \right)~dx = \int_\Omega u_\nu \log \frac{u_\nu}{m}~dx + m\log m - m + 1 \ge \int_\Omega u_\nu \log \frac{u_\nu}{m}~dx, 
\end{equation*} 
we directly get estimates \eqref{apriori3}, \eqref{apriori3.5}, and \eqref{apriori4}.

Estimate \eqref{apriori5} follows directly from \eqref{apriori4}, using \eqref{approx2}. Using furthermore $0 \leq e^{-L_\nu v_\nu} \leq 1$ together with \eqref{unif_duality}, we get \eqref{apriori6b}. Rewriting Eq. \eqref{approx1} as
\begin{equation*}
K \partial_t u_{\nu} = - e^{-L_\nu v_{\nu}} u_{\nu} + \overline{e^{-L_\nu v_{\nu}} u_{\nu}},
\end{equation*}
we deduce \eqref{apriori6c} from \eqref{apriori1} and \eqref{apriori6b}.
\end{proof}

\subsection{Passage to the limit in the approximate system}
We are now ready to prove Theorem~\ref{thm:globalexistence} by passing to the limit as $\nu\to 0$ for solutions of \eqref{sys:approx}.

\begin{proof}[Proof of Theorem~\ref{thm:globalexistence}]
We first suppose that $u_0\in L^2(\Omega)$, so that we can use the approximation procedure previously built. We employ the estimates \eqref{apriori2}-\eqref{apriori6c} to extract a weakly-star convergent subsequence $(u_{\nu_n},v_{\nu_n})_{n \in \N}$ with the following properties:
\begin{align*}
u_{\nu_n}& \stackrel{*}{\rightharpoonup} u &&\text{in } L^\infty(0,T;\Hdual(\Omega)), \\
u_{\nu_n} & \rightharpoonup u && \text{in } L^1((0,T)\times \Omega), \\
K(u_{\nu_n}-m) & \rightharpoonup^* K(u-m) &&\text{in } L^\infty(0,T;H^1(\Omega)), \\
\partial_t K(u_{\nu_n}-m)&\rightharpoonup  \partial_t K(u-m) &&\text{in } L^2((0,T)\times\Omega)), \\
v_{\nu_n}& \stackrel{*}{\rightharpoonup} v &&\text{in } L^\infty(0,T;H^1(\Omega)), \\
\partial_t v_{\nu_n}&\rightharpoonup  \partial_t v &&\text{in }  L^2((0,T)\times\Omega), \\
e^{-L_{\nu_n} v_{\nu_n}} u_{\nu_n}&\rightharpoonup w &&\text{in } L^2((0,T)\times\Omega).
\end{align*}
Moreover, from the Aubin-Lions-Simon theorem (cf. \cite[Corollary~4]{simon1986compact}), we can immediately deduce that up to the extraction of a subsequence $(v_{\nu_n})_{n\in\N}$ and $(K(u_{\nu_n}-m)_{n\in\N})$ converge strongly to $v$ and $K(u-m)$ in $C([0,T];L^2(\Omega))$, respectively. The properties of $L_\nu$ also imply that $L_{\nu_n} v_{\nu_n} \rightarrow v$ in $L^2((0,T)\times\Omega)$ and $L_{\nu_n} u_{\nu_n} \rightharpoonup^* u$
in $L^\infty(0,T;\Hdual(\Omega))$.
Then we can pass to the limit in the second equation \eqref{approx2}, which is linear and also in the left-hand side of the first equation 
$$ K \partial_t u_{\nu_n} = - e^{-L_{\nu_n} v_{\nu_n}} u_{\nu_n} + \overline{e^{-L_{\nu_n} v_{\nu_n}} u_{\nu_n}},$$
which is obtained from \eqref{approx1}. Finally, we use that $(e^{-L_{\nu_n} v_{\nu_n}})_{n\in\N}$ converges pointwise towards $e^{-v}$ and is uniformly bounded (since $|e^{-L_{\nu_n} v_{\nu_n}}|\le 1$), and that $(u_{\nu_n})_{n\in\N}$ converges weakly in $L^1((0,T)\times\Omega)$ to pass in the weak limit in the product $e^{-L_{\nu_n} v_{\nu_n}} u_{\nu_n}$ and identify $w=e^{-v} u$ (see for example \cite[Proposition~2.61]{FonsecaLeoni}). This shows that $(u,v)$ is a weak-strong solution of \eqref{sys:localsensing} on $(0,T)$, and by Cantor's diagonal argument, we can build a global (in time) weak-strong solution of \eqref{sys:localsensing}, which completes the proof of Theorem~\ref{thm:globalexistence} for initial values $u_0 \in L^2(\Omega)$ and $v_0 \in H^1(\Omega)$. 

Now, if $u_0\notin L^2(\Omega)$, since estimates \eqref{apriori1}--\eqref{apriori6c} do not depend on $\Vert u_0 \Vert_{L^2(\Omega)}$, but only on $m$, $\int_\Omega u_0 \log(u_0/m) dx$ and $\Vert u_0 \Vert_{\Hdual(\Omega)}$, we can replace $u_0$ by a more regular ($L^2$) approximation $u_{0,\nu}$ in the approximation procedure and pass to the limit in the same way. This completes the proof of the existence of a global weak-strong solution of \eqref{sys:localsensing}.

Finally, we can pass to the limit in estimates \eqref{unif_duality}, \eqref{unif_entropy}, and \eqref{entropy_on_Hdual_nu} and obtain the estimates listed in Theorem~\ref{thm:globalexistence}.
\end{proof}

\subsection{The parabolic-elliptic problem}\label{sec:parell}

As announced in Remark~\ref{rem:parell}, we give here some elements for the proof of the global existence for the parabolic-elliptic problem
\eqref{parell}. We recall the problem
\begin{align}
\partial_t u &= \Delta (e^{-v} u), \qquad \text{in } (0,\infty)\times \Omega, \label{parell1}\\
0 &= \epsilon \Delta v - \beta v + u, \qquad \text{in } (0,\infty)\times \Omega, \label{parell2}
\end{align}
with no-flux boundary conditions and nonnegative initial condition $u_0$ satisfying \eqref{hypidu}. We have a similar gradient structure as in the fully parabolic case, but the second equation is even at equilibrium. In particular mass conservation, duality estimates, and entropy dissipation are preserved. The only difference is the bound on the time derivative of $v$, which needs to replaced by the time differentiated version of \eqref{parell2} 
$$ - \epsilon \Delta \partial_t v + \beta \partial_t v = \partial_t u, $$
which implies 
$$ ( I + \frac{\beta}\epsilon K) \partial_t v =  \frac{1}\epsilon K \partial_t u \in L^2((0,T)\times\Omega) ,$$
taking into account that $\bar{v}(t)=m/\beta$ for $t\ge 0$ by \eqref{parell2}. Hence, there is an \emph{a priori} bound on $\partial_t v$ that can be used with the Aubin-Lions-Simon theorem (cf. \cite[Corollary~4]{simon1986compact}) in order to establish strong compactness of $v$. Indeed, using this strategy, the existence of a solution to \eqref{parell} can be established with the same kind of proof as above based on the approximate problem
\begin{align*}
\partial_t u  &= \Delta (e^{-L_\nu v} u), \qquad \text{in } (0,\infty)\times \Omega, \\
0 &= \epsilon \Delta v - \beta v + L_\nu u,  \qquad \text{in } (0,\infty)\times \Omega,
\end{align*}
after passing to the limit $\nu \rightarrow 0$, using the mass, entropy and duality bounds in an appropriate way.
 
\section{Uniqueness and Regularity}\label{sec:uniqueness}
We now investigate uniqueness of the solutions (in any dimension $d$) and their regularity (in dimension 1 and 2).

\paragraph{Notation.} In this section we use the short notation $\| u \|_p$ for the norm $\| u \|_{L^p(\Omega)}$ for $p\in [1,\infty]$.

\subsection{Uniqueness}
In this section we prove the result of uniqueness stated in Theorem~\ref{thm:uniqueness}.

\begin{proof}[Proof of Theorem~\ref{thm:uniqueness}]
Let $(u_i,v_i)$, $i=1,2$, be two solutions of \eqref{sys:localsensing} with respective initial conditions $(u_{0,i},v_{0,i})$, $i=1,2$ such that
\begin{equation*}
\overline{u_{0,i}} = m\ , \qquad i=1,2,\,
\end{equation*}
 and assume that 
\begin{equation}
u_i \in L^\infty(0,T;L^q(\Omega))\ , \;\text{ for all }\; T>0\ , \qquad i=1,2.\label{PLu0}
\end{equation}
Then we know from nonnegativity and mass conservation that
\begin{equation*}
\|u_1(t)\|_1 = \| u_2(t)\|_1 = m\ , \qquad t\ge 0\ , 
\end{equation*}
so that $U:=u_1-u_2$ has zero average with respect to the space variable and $KU$ is well-defined. Since $-\Delta KU = U$ with $KU\in L^\infty(0,T;H^1(\Omega))$ and $K \partial_t U \in L^2(0,T;(H^1)'(\Omega))$, it follows from \eqref{PLe1} and Young's inequality that
\begin{align*}
\frac{1}{2} \frac{d}{dt} \|\nabla KU\|_2^2 & = \int_\Omega U K\partial_t U\ dx = \int_\Omega U \left( - u_1 e^{-v_1} + u_2 e^{-v_2} \right)\ dx\\
& = - \frac{1}{2} \int_\Omega U \left[ (u_1+u_2) \left( e^{-v_1} - e^{-v_2} \right) + \left( e^{-v_1} + e^{-v_2} \right) U \right]\ dx \\
& = - \frac{1}{2} \int_\Omega \left( e^{-v_1} + e^{-v_2} \right) U^2\ dx - \frac{1}{2} \int_\Omega (u_1+u_2) \frac{e^{-v_1} - e^{-v_2} }{\sqrt{e^{-v_1} + e^{-v_2}}} \sqrt{e^{-v_1} + e^{-v_2}} U\ dx \\
& \le \frac{1}{8} \int_\Omega (u_1+u_2)^2 \frac{\left| e^{-v_1} - e^{-v_2} \right|^2}{e^{-v_1} + e^{-v_2}}\ dx\ . 
\end{align*}
Since
\begin{equation*}
\frac{\left| e^{-X} - e^{-Y} \right|}{\sqrt{e^{-X} + e^{-Y}}} \le \frac{e^{-\min(X,Y)} |Y-X|}{\sqrt{e^{-X} + e^{-Y}}} \le |Y-X|\ , \qquad (X,Y)\in [0,\infty)^2\ ,
\end{equation*}
we further obtain
\begin{equation*}
\frac{d}{dt} \|\nabla KU\|_2^2 \le \frac{1}{4} \int_\Omega (u_1+u_2)^2 V^2\ dx\ ,
\end{equation*}
where $V := v_1-v_2$. Introducing $p:=2q/(q-2) \in (2,2^*]$ (notice that $p=2^*$ when $d\ge 3$), we infer from H\"older's inequality and Sobolev's embedding theorem that
\begin{equation}
\frac{d}{dt} \|\nabla KU\|_2^2 \le \frac{1}{4} \|u_1+u_2\|_{2p/(p-2)}^2 \|V\|_p^2 \le C_1 \|u_1+u_2\|_{q}^2 \|V\|_{H^1(\Omega)}^2\ . \label{PLu3}
\end{equation}

Also, from \eqref{PLe2} and Young's inequality,
\begin{align*}
\frac{1}{2} \frac{d}{dt} \|V\|_2^2 + \epsilon \|\nabla V\|_2^2 + \beta \|V\|_2^2 & = - \int_\Omega V \Delta KU\ dx = \int_\Omega \nabla V\cdot \nabla KU\ dx \\
& \le \frac{\epsilon}{2} \|\nabla V\|_2^2 + \frac{1}{2\epsilon} \|\nabla KU\|_2^2\ .
\end{align*}
Hence,
\begin{equation}
\frac{d}{dt} \|V\|_2^2 + \min\{\epsilon,2\beta\} \|V\|_{H^1(\Omega)}^2\le \frac{1}{\epsilon} \|\nabla KU\|_2^2\ . \label{PLu4}
\end{equation}

Now, let $T>0$ and set
\begin{equation}
M := \sup_{t\in [0,T]}\left\{ \|u_1(t)+u_2(t)\|_{q} \right\}\ , \label{PLu5}
\end{equation}
which is finite according to \eqref{PLu0}. Introducing $C_2 := \min\{\epsilon,2\beta\}/(C_1 M^2)$, we combine \eqref{PLu3}, \eqref{PLu4}, and \eqref{PLu5} to obtain
\begin{align*}
\frac{d}{dt} \left( C_2 \|\nabla KU\|_2^2 + \|V\|_2^2 \right) & \le C_1 C_2 M^2 \|V\|_{H^1(\Omega)}^2 - \min\{\epsilon,2\beta\} \|V\|_{H^1(\Omega)}^2 + \frac{1}{\epsilon} \|\nabla KU\|_2^2 \\
& = \frac{1}{\epsilon} \|\nabla KU\|_2^2 \le \frac{1}{\epsilon C_2} \left( C_2 \|\nabla KU\|_2^2 + \|V\|_2^2 \right)\ .
\end{align*}
After integration with respect to time, we end up with
\begin{equation*}
C_2 \|\nabla KU(t)\|_2^2 + \|V(t)\|_2^2 \le e^{t/(\epsilon C_2)} \left( C_2 \|\nabla KU(0)\|_2^2 + \|V(0)\|_2^2 \right)\ , \qquad t\in [0,T]\ .
\end{equation*}
Uniqueness readily follows from the previous inequality.
\end{proof}

\subsection{Regularity in spatial dimension one}

\begin{proof}[Proof of Proposition~\ref{reg_dim1}]
In spatial dimension one, the embedding $H^1(\Omega) \hookrightarrow C(\bar{\Omega})$ and Theorem~\ref{thm:globalexistence} implies that $v$ belongs to $L^\infty((0,T)\times \Omega)$ for all $T>0$, so that $1\ge e^{-v} \ge 1/C(T)>0$ for some $C(T)>1$. Then $u\in L^2((0,T)\times\Omega)$ by Theorem~\ref{thm:globalexistence} and, since
$$ \epsilon \Delta v = (\epsilon \Delta v - u + \beta v) + u - \beta v,$$
the $L^2$-regularity of $\epsilon \Delta v - u + \beta v$, $u$, and $v$ entail that $\Delta v\in L^2((0,T)\times\Omega)$. Hence, $v \in L^2(0,T;H^2(\Omega))$. 

Thanks to this additional regularity on $v$, for $u_0 \in L^r(\Omega)$, it is straightforward to derive the identity
\begin{align*}
\| u(t)\|_r^r & + r(r-1)\int_0^t \int_\Omega e^{-v} u^{r-2} |\partial_x u|^2 ~dx~ds \\
& =  \| u_0\|_r^r + r (r-1) \int_0^t \int_\Omega e^{-v} u^{r-1}\partial_x u\partial_x v ~dx~ds,
\end{align*}
from which we deduce with the help of Young's inequality, the nonnegativity of $v$, and the embedding $H^1(\Omega)\hookrightarrow C(\bar{\Omega})$ that
\begin{align*}  
\| u(t)\|_r^r &\leq  \| u_0\|_r^r +
\frac{r (r-1)}4 \int_0^t \int_\Omega  u^r  |\partial_x v|^2 ~dx~ds \\
& \leq \|u_0\|_r^r + \frac{r(r-1)}{4} \int_0^t \| \partial_x v\|_\infty^2 \| u\|_r^r ~ds \\
& \leq \| u_0\|_r^r + C r^2 \int_0^t \Vert v(s)\Vert_{H^2(\Omega)}^2 \| u\|_r^r ~ds .
\end{align*}
Since $s\mapsto \Vert v(s)\Vert_{H^2(\Omega)}^2 \in L^1(0,T)$, Gronwall's Lemma implies the boundedness of $\| u\|_r^r$ in $(0,T)$. Thus we have proved Proposition~\ref{reg_dim1}.
\end{proof}

\subsection{Regularity in spatial dimension two}

\newcounter{NumConst}


\refstepcounter{NumConst}\label{PLrcst1}

Let $T>0$. We recall that, from the entropy and duality estimates, there is $C_{\ref{PLrcst1}}(T)>0$ such that, for $t\in [0,T]$,
\begin{align}
\|\nabla Ku(t)\|_2^2 + \int_0^t \int_\Omega u^2 e^{-v}\ dxds & \le C_{\ref{PLrcst1}}(T)\ , \label{PLr1} \\
\int_\Omega u(t)\log u(t) - u(t) +1\ dx + \frac{\epsilon}{2} \|\nabla v(t)\|_2^2 + \frac{\beta}{2} \|v(t)\|_2^2 & \le C_{\ref{PLrcst1}}(T)\ , \label{PLr2} \\
\int_0^t \|\partial_t v\|_2^2\ ds + \int_0^t \int_\Omega u e^{-v} |\nabla(\ln{u}-v)|^2\ dxds & \le C_{\ref{PLrcst1}}(T)\ , \label{PLr3}
\end{align} 

In order to prove Proposition~\ref{P.PLr0} we need two lemmas providing intermediate estimates:
\begin{lemma}\label{L.PLr1}
\refstepcounter{NumConst}\label{PLrcst2} There is $C_{\ref{PLrcst2}}(T)>0$ such that
\begin{equation*}
\|u(t)\|_2 + \|v(t)\|_\infty \le C_{\ref{PLrcst2}}(T)\ , \qquad t\in [0,T]\ .
\end{equation*}
\end{lemma}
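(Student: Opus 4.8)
The plan is to prove the two bounds in turn: first the $L^2$ bound on $u$, which is the substantial part, and then the $L^\infty$ bound on $v$, which follows easily once the former is available.

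First I would test the first equation of \eqref{sys:localsensing} with $u$ itself. Using $\partial_t u = \Delta(e^{-v}u)$ and integrating by parts (all boundary terms vanishing by the no-flux conditions) yields the energy identity
\[
\frac12\frac{d}{dt}\|u\|_2^2 + \int_\Omega e^{-v}|\nabla u|^2\,dx = \int_\Omega u\,e^{-v}\,\nabla u\cdot\nabla v\,dx .
\]
By Young's inequality the cross term on the right is dominated by $\tfrac12\int_\Omega e^{-v}|\nabla u|^2\,dx + \tfrac12 J$, where $J := \int_\Omega u^2 e^{-v}|\nabla v|^2\,dx$, so that half of the weighted dissipation survives and it remains only to control $J$ in order to close an inequality for $\|u\|_2^2$.

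To estimate $J$ I would exploit the two-dimensional setting. Writing $u^2 e^{-v} = (u e^{-v/2})^2$ and applying H\"older's inequality gives $J\le \|u e^{-v/2}\|_4^2\,\|\nabla v\|_4^2$. The factor $\|u e^{-v/2}\|_4^2$ is controlled by the Ladyzhenskaya/Gagliardo--Nirenberg inequality in terms of $\|u e^{-v/2}\|_2^2 = \int_\Omega u^2 e^{-v}\,dx$ and $\|\nabla(u e^{-v/2})\|_2^2$; the latter is in turn bounded by the weighted dissipation $\int_\Omega e^{-v}|\nabla u|^2\,dx$ plus a fraction of $J$, since $\nabla(u e^{-v/2}) = e^{-v/2}(\nabla u - \tfrac12 u\nabla v)$. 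The factor $\|\nabla v\|_4^2$ is controlled, again by Gagliardo--Nirenberg, through $\|\nabla v\|_2$ and $\|v\|_{H^2(\Omega)}$, the latter being estimated from the second equation via $\epsilon\Delta v = \partial_t v + \beta v - u$. The crucial point is that the coefficient multiplying the dissipation carries the factor $\int_\Omega u^2 e^{-v}\,dx$, whose time integral is bounded by \eqref{PLr1}; combined with \eqref{PLr2}--\eqref{PLr3} this is what allows the destabilising contribution to be absorbed and leaves a differential inequality of the form $\frac{d}{dt}\|u\|_2^2 + \tfrac12\int_\Omega e^{-v}|\nabla u|^2\,dx \le g(t)\,\|u\|_2^2 + h(t)$ with $g,h\in L^1(0,T)$, after which Gronwall's lemma gives $\sup_{[0,T]}\|u(t)\|_2 \le C(T)$.

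Once $u\in L^\infty(0,T;L^2(\Omega))$ is known, the bound on $v$ follows from the Duhamel representation for the second equation: since $v_0\in L^\infty(\Omega)$ and, in dimension two, the Neumann heat semigroup satisfies $\|e^{\tau\Delta}f\|_\infty \lesssim \tau^{-1/2}\|f\|_2$ with exponent $\tfrac12<1$ integrable in time, one obtains $\|v(t)\|_\infty \le \|v_0\|_\infty + C\sqrt{t}\,\sup_{[0,T]}\|u\|_2 \le C(T)$. I expect the main obstacle to lie entirely in the control of $J$: the diffusion is degenerate, with no a priori pointwise lower bound on $e^{-v}$, and $J$ is critical in two dimensions, so that a crude estimate produces a superlinear (quartic) right-hand side which would blow up in finite time --- precisely the mechanism responsible for finite-time blow-up in the classical Keller--Segel model. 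The resolution is to use the finite-time duality estimate \eqref{PLr1} together with the entropy dissipation to supply exactly the smallness and the $L^1(0,T)$ integrability needed to absorb this term, at the cost of a constant $C(T)$ that necessarily deteriorates as $T\to\infty$, in agreement with blow-up being postponed to infinite time. To make these manipulations rigorous one would carry them out on the regularised solutions of \eqref{sys:approx}, tracking the dependence of the constants only on the uniform bounds of Proposition~\ref{prop:unif_est}; alternatively, the comparison method based on an auxiliary elliptic problem, as in \cite{FujieJiang20}, offers a route to the same conclusion.
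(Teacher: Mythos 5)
Your proposal is correct and follows essentially the same route as the paper: test \eqref{PLe1} with $u$, bound the critical term $\int_\Omega u^2 e^{-v}|\nabla v|^2\,dx$ by H\"older and Gagliardo--Nirenberg through $\|ue^{-v/2}\|_4$ and $\|\nabla v\|_4$, absorb the dissipation, and close Gronwall using the $L^1(0,T)$ integrability of $\int_\Omega u^2 e^{-v}\,dx$ and $\|\partial_t v\|_2^2$ from \eqref{PLr1}--\eqref{PLr3}, then get $\|v\|_\infty$ from the semigroup estimate. The only cosmetic difference is that the paper works with $w=ue^{-v/2}$ from the outset, so that the identity $\frac12\frac{d}{dt}\|u\|_2^2=-\|\nabla w\|_2^2+\frac14\|w\nabla v\|_2^2$ produces the dissipation $\|\nabla w\|_2^2$ directly rather than via your conversion from $\int_\Omega e^{-v}|\nabla u|^2\,dx$.
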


\begin{proof}
Set $w:= u e^{-v/2}$. It follows from \eqref{PLe1} that
\begin{align}
\frac{1}{2} \frac{d}{dt} \|u\|_2^2 & = \int_\Omega w e^{v/2} \Delta\left( w e^{-v/2} \right)\ dx = - \int_\Omega \nabla \left( w e^{v/2} \right) \cdot  \nabla\left( w e^{-v/2} \right)\ dx \nonumber \\
& = - \int_\Omega \left( \nabla w + \frac{w}{2} \nabla v \right) \cdot \left( \nabla w - \frac{w}{2} \nabla v \right)\ dx \nonumber \\
& = - \|\nabla w\|_2^2 + \frac{1}{4} \|w \nabla v\|_2^2\ . \label{PLr4}
\end{align}	
Next, by H\"older's inequality,
\begin{equation*}
\|w \nabla v\|_2^2 \le \|w\|_4^2 \|\nabla v\|_4^2\ . 
\end{equation*}
On the one hand, it follows from Gagliardo-Nirenberg's inequality that
\begin{equation*}
\|w\|_4 \le C \|w\|_{H^1(\Omega)}^{1/2} \|w\|_2^{1/2}\ .
\end{equation*}
On the other hand, we infer from \eqref{PLe1}, \eqref{PLr2}, and Gagliardo-Nirenberg's and Calderon-Zygmund's inequalities that
\begin{align*}
\|\nabla v\|_4 & \le C \left( \|D^2 v\|_2^{1/2} \|\nabla v\|_2^{1/2} + \|\nabla v\|_2 \right) \le C(T) \left( 1 +  \|\Delta v\|_2^{1/2} \right) \\
& \le C(T) \left( 1 + \|\partial_t v + \beta v - u\|_2 \right)^{1/2} \le C(T) \left( 1 + \|\partial_t v\|_2 + \|u\|_2 \right)^{1/2}\ .
\end{align*}
Combining the above estimates leads us to
\begin{equation*}
\|w \nabla v\|_2^2 \le C(T) \|w\|_{H^1(\Omega)} \|w\|_2 \left( 1 + \|\partial_t v\|_2 + \|u\|_2 \right).
\end{equation*}
Hence, after using Young's inequality and the property $w\le u$,
\begin{align}
\|w \nabla v\|_2^2 & \le 2 \|w\|_{H^1(\Omega)}^2 + C(T) \|w\|_2^2 \left( 1 + \|\partial_t v\|_2^2 + \|u\|_2^2 \right) \nonumber \\
& \le 2 \|\nabla w\|_2^2 + C(T) \|w\|_2^2 + C(T) \|\partial_t v\|_2^2 \|u\|_2^2 + C(T) \|w\|_2^2 \|u\|_2^2\ . \label{PLr5}
\end{align}
It follows from \eqref{PLr4} and \eqref{PLr5} that 
\begin{align*}
\frac{d}{dt} \|u\|_2^2 & \le - \|\nabla w\|_2^2 + C(T) \|w\|_2^2 + C(T) \left(  \|\partial_t v\|_2^2 + \|w\|_2^2 \right) \|u\|_2^2 \\
& \le C(T) \|w\|_2^2 + C(T) \left(  \|\partial_t v\|_2^2 + \|w\|_2^2 \right)\ \|u\|_2^2 .
\end{align*}
Gronwall's lemma then entails that
\begin{equation*}
\|u(t)\|_2^2 \le \left( \|u_0\|_2^2 + C(T) \int_0^t \|w(s)\|_2^2\ ds \right) \exp\left\{ C(T) \int_0^t \left( \|\partial_t v(s)\|_2^2 + \|w(s)\|_2^2\right)\ ds \right\}
\end{equation*}
for $t\in [0,T]$ and we deduce from \eqref{PLr1} and \eqref{PLr3} that
\begin{equation}
\|u(t)\|_2 \le C(T)\ , \qquad t\in [0,T]\ . \label{PLr6}
\end{equation}
We next exploit the regularizing properties of the semigroup in $L^2(\Omega)$ associated with the linear operator $-\epsilon \Delta + \beta I$ supplemented with homogeneous Neumann boundary conditions to derive from \eqref{PLe2} that, for $t\in [0,T]$,
\begin{equation*}
\|v(t)\|_\infty \le C \|v_0\|_\infty + C \int_0^t (t-s)^{-1/2} e^{-\beta (t-s)} \|u(s)\|_2\ ds \le C \|v_0\|_\infty + C \sup_{s\in [0,T]}\{\|u(s)\|_2\}\ .
\end{equation*}
Combining the above inequality and \eqref{PLr6} completes the proof.
\end{proof}

\begin{lemma}\label{L.PLr2}
\refstepcounter{NumConst}\label{PLrcst3} Let $p\in (2,\infty)$. There is $C_{\ref{PLrcst3}}(T,p)>0$ such that
\begin{equation*}
\|u(t)\|_p \le C_{\ref{PLrcst3}}(T,p)\ , \qquad t\in [0,T]\ .
\end{equation*}
\end{lemma}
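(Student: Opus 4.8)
The plan is to bootstrap from the $L^2$-bound on $u$ and the $L^\infty$-bound on $v$ already established in Lemma~\ref{L.PLr1} to successively higher $L^p$-norms. The natural starting point is to derive the evolution identity for $\|u\|_p^p$, exactly as in the one-dimensional case but now in $d=2$. First I would test \eqref{PLe1} against $u^{p-1}$ (formally; rigorously one uses the weak-strong formulation and a truncation argument) to obtain
\begin{align*}
\frac{1}{p}\frac{d}{dt}\|u\|_p^p + (p-1)\int_\Omega e^{-v} u^{p-2}|\nabla u|^2\,dx = (p-1)\int_\Omega e^{-v} u^{p-1}\nabla u\cdot\nabla v\,dx.
\end{align*}
Rewriting the dissipation term in terms of $\nabla(u^{p/2})$, the left-hand side controls $\|\nabla(u^{p/2})\|_2^2$ since $e^{-v}\ge e^{-\|v\|_\infty}>0$ by Lemma~\ref{L.PLr1}. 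The right-hand side, after a Cauchy--Schwarz/Young split, reduces to controlling a term of the form $\int_\Omega u^p|\nabla v|^2\,dx$.

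The crux is then to absorb $\int_\Omega u^p|\nabla v|^2\,dx$ into the good dissipation term $\|\nabla(u^{p/2})\|_2^2$ up to lower-order contributions. I would estimate $\int_\Omega u^p|\nabla v|^2\,dx \le \|u^{p/2}\|_4^2\,\|\nabla v\|_4^2$ by H\"older, then use the Gagliardo--Nirenberg inequality $\|u^{p/2}\|_4 \le C\|u^{p/2}\|_{H^1}^{1/2}\|u^{p/2}\|_2^{1/2}$ (valid in $d=2$) exactly as in the proof of Lemma~\ref{L.PLr1}. For the factor $\|\nabla v\|_4$, the point is that Lemma~\ref{L.PLr1} now furnishes $\|u\|_2 \le C_{\ref{PLrcst2}}(T)$, so via the elliptic regularity/Calderon--Zygmund argument applied to $\epsilon\Delta v = \partial_t v + \beta v - u$ one gets $\|\nabla v\|_4 \le C(T)(1+\|\partial_t v\|_2)^{1/2}$, with $\|\partial_t v\|_2$ integrable in time by \eqref{PLr3}. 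Feeding these into the differential inequality, the $H^1$-part of the Gagliardo--Nirenberg bound gets absorbed into $\|\nabla(u^{p/2})\|_2^2$ (using Young's inequality), leaving a Gronwall-type inequality
\begin{align*}
\frac{d}{dt}\|u\|_p^p \le C(T,p)\big(1+\|\partial_t v\|_2^2\big)\|u\|_p^p + C(T,p).
\end{align*}

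The main obstacle I anticipate is keeping the exponents consistent in the Gagliardo--Nirenberg step: unlike Lemma~\ref{L.PLr1}, where $w=ue^{-v/2}$ has a fixed power, here the power $p/2$ grows, and the interpolation constant and the lower-order $\|u^{p/2}\|_2 = \|u\|_p^{p/2}$ term must be handled so that the resulting coefficient in front of $\|u\|_p^p$ stays time-integrable and independent of the solution. The key enabling facts are the time-integrability $\|\partial_t v\|_2^2\in L^1(0,T)$ from \eqref{PLr3} and the uniform bound $\|u\|_2\le C_{\ref{PLrcst2}}(T)$ from Lemma~\ref{L.PLr1}, which together make the $\|\nabla v\|_4^2$ factor integrable in time. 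Once the differential inequality is in Gronwall form with an $L^1(0,T)$ coefficient, Gronwall's lemma applied with the finite initial value $\|u_0\|_p^p$ (finite since $u_0\in L^\infty(\Omega)$) yields $\|u(t)\|_p \le C_{\ref{PLrcst3}}(T,p)$ for all $t\in[0,T]$, completing the proof.
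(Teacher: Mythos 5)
Your proposal is correct and follows essentially the same route as the paper: test \eqref{PLe1} with $u^{p-1}$, reduce the bad term to $\int_\Omega u^p|\nabla v|^2\,dx$, bound it by H\"older and Gagliardo--Nirenberg, absorb the gradient part into the dissipation via Young, and close with Gronwall using the time-integrability of $\|\Delta v\|_2^2$ and $\|\nabla v\|_4^4$ supplied by \eqref{PLr3} and Lemma~\ref{L.PLr1}. The only (harmless) deviation is that you dispose of the cross term by Cauchy--Schwarz/Young rather than integrating it by parts as the paper does, which spares you the explicit $\|\Delta v\|_2\,\|u\|_{2p}^p$ contribution but changes nothing substantive, since the Calderon--Zygmund control of $\|\nabla v\|_4^4$ requires $\|\Delta v\|_2^2\in L^1(0,T)$ in either case.
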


\begin{proof}
\refstepcounter{NumConst}\label{PLrcst4} Introducing $C_{\ref{PLrcst4}}(T) := e^{-C_{\ref{PLrcst2}}(T)}>0$, we infer from Lemma~\ref{L.PLr1} that
\begin{equation}
e^{-v(t,x)}\ge C_4(T)\ , \qquad (t,x)\in [0,T]\times \Omega\ . \label{PLr7}
\end{equation}

Next, let $p\in (2,\infty)$. By \eqref{PLe1}, \eqref{PLe2}, and \eqref{PLr7}, 
\begin{align*}
\frac{d}{dt} \|u\|_p^p & = - p (p-1) \int_\Omega u^{p-2} \nabla u \cdot \nabla\left( u e^{-v} \right)\ dx \\
& = - \frac{4(p-1)}{p} \int_\Omega e^{-v} |\nabla u^{p/2}|^2\ dx + (p-1) \int_\Omega e^{-v} \nabla u^p \cdot \nabla v\ dx \\
& \le - 2 C_{\ref{PLrcst4}}(T) \left\| \nabla u^{p/2} \right\|_2^2 - (p-1) \int_\Omega e^{-v} u^p \Delta v\ dx + (p-1) \int_\Omega e^{-v} u^p |\nabla v|^2\ dx \\
& \le - 2 C_{\ref{PLrcst4}}(T) \left\| \nabla u^{p/2} \right\|_2^2 + (p-1) \int_\Omega u^p \left( |\Delta v| + |\nabla v|^2 \right)\ dx\ .
\end{align*}
A further use of H\"older's inequality gives
\begin{equation}
\frac{d}{dt} \|u\|_p^p \le - 2 C_{\ref{PLrcst4}}(T) \left\| \nabla u^{p/2} \right\|_2^2 + (p-1) \left( \|\Delta v\|_2 + \|\nabla v\|_4^2 \right) \|u\|_{2p}^p\ . \label{PLr8}
\end{equation}
To estimate the last term on the right hand side of \eqref{PLr8}, we observe that \eqref{PLe2}, \eqref{PLr2}, \eqref{PLr3}, and Lemma~\ref{L.PLr1} imply that
\begin{equation}
\int_0^T \|\Delta v(s)\|_2^2\ ds \le C \int_0^T \left( \|\partial_t v(s)\|_2^2 + \|v(s)\|_2^2 + \|u(s)\|_2^2 \right)\ ds \le C(T)\ , \label{PLr9}
\end{equation}
while \eqref{PLr2} and Gagliardo-Nirenberg's and Calderon-Zygmund's inequalities guarantee that
\begin{align*}
\int_0^T \|\nabla v(s)\|_4^4\ ds & \le C \int_0^T \left( \|D^2 v(s)\|_2^2 \|\nabla v(s)\|_2^2 + \|\nabla v(s)\|_2^2 \right)\ ds \\
& \le C(T) \int_0^T \left( 1 + \|\Delta v(s)\|_2^2 \right)\ ds\ . 
\end{align*}
Combining \eqref{PLr9} and the previous inequality gives
\begin{equation}
\int_0^T \|\nabla v(s)\|_4^4\ ds \le C(T)\ . \label{PLr10}
\end{equation}
We next infer from Gagliardo-Nirenberg's inequality that
\begin{align*}
\|u\|_{2p}^p & = \left\| u^{p/2} \right\|_4^2 \le C \left( \left\| \nabla u^{p/2} \right\|_2 \left\| u^{p/2} \right\|_2 + \left\| u^{p/2} \right\|_2^2 \right) \\
& = C \left( \left\| \nabla u^{p/2} \right\|_2 \|u\|_p^{p/2} + \|u\|_p^p \right)\ ,
\end{align*}
which, together with \eqref{PLr8} and Young's inequality, leads us to 
\begin{align*}
\frac{d}{dt} \|u\|_p^p & \le - 2 C_{\ref{PLrcst4}}(T) \left\| \nabla u^{p/2} \right\|_2^2 + (p-1) C \left( \|\Delta v\|_2 + \|\nabla v\|_4^2 \right) \left( \left\| \nabla u^{p/2} \right\|_2 \|u\|_p^{p/2} + \|u\|_p^p \right) \\
& \le - C_{\ref{PLrcst4}}(T) \left\| \nabla u^{p/2} \right\|_2^2 + (p-1)^2 C \left( \|\Delta v\|_2 + \|\nabla v\|_4^2 \right)^2 \|u\|_p^p \\
& \qquad + (p-1) C \left( \|\Delta v\|_2 + \|\nabla v\|_4^2 \right) \|u\|_p^p \\
& \le (p-1)^2 C \left( 1 + \|\Delta v\|_2^2 + \|\nabla v\|_4^4 \right) \|u\|_p^p \ .
\end{align*}
By Gronwall's lemma,
\begin{equation*}
\|u(t)\|_p^p \le \|u_0\|_p^p \exp\left\{ (p-1)^2 C(T) \int_0^t \left( 1 + \|\Delta v(s)\|_2^2 + \|\nabla v(s)\|_4^4 \right)\ ds \right\}\ , \qquad t\in [0,T]\ ,
\end{equation*}
and we complete the proof with the help of \eqref{PLr9} and \eqref{PLr10}.
\end{proof}

\begin{proof}[Proof of Proposition~\ref{P.PLr0}]
Let $p>2$. We first infer from \eqref{PLe2}, Lemma~\ref{L.PLr2}, and the regularizing properties of the semigroup in $L^p(\Omega)$ associated with the linear operator $-\epsilon \Delta + \beta I$ supplemented with homogeneous Neumann boundary conditions that, for $t\in [0,T]$, 
\refstepcounter{NumConst}\label{PLrcst5}
\begin{align} \nonumber
\|\nabla v(t) \|_p
&\le C \|\nabla v_0\|_p + C \int_0^t (t-s)^{-1/2} e^{-\beta (t-s)}\|u(s)\|_p\ ds \\
&\le C (1+ \sup_{s\in(0,t)}\|u(s)\|_p) \le C_{\ref{PLrcst5}}(T,p)\ . \label{PLr11}
\end{align}
Next an alternative formulation of \eqref{PLe1} reads
\begin{equation*}
\partial_t u = \mathrm{div} (D \nabla u + \mathbf{F}) \;\text{ in }\; (0,T)\times \Omega\ , 
\end{equation*}
with
\begin{equation*}
D := e^{-v} \;\text{ and }\; \mathbf{F} := - u e^{-v} \nabla v\ .
\end{equation*}
Now, $u\in L^\infty(0,T;L^2(\Omega))$ and $D\ge e^{-C_{\ref{PLrcst2}}(T)}> 0$ in $(0,T)\times \Omega$  by Lemma~\ref{L.PLr1}, while $\mathbf{F}\in L^\infty(0,T;L^4(\Omega;\mathbb{R}^2))$ by \eqref{PLr11} and Lemma~\ref{L.PLr2}. We are then in a position to apply \cite[Lemma~A.1]{TaoWinklerBound} to conclude that $u\in L^\infty((0,T)\times\Omega)$. Since the boundedness of $v$ follows from Lemma~\ref{L.PLr1}, the proof is complete.
\end{proof}

\section{Conclusion and outlook}\label{sec:conclusion}

We have analyzed a model for chemotactic pattern formation with a local sensing mechanism instead of the commonly used gradient sensing. We have shown that even for arbitrary large mass there exists a weak-strong solution for arbitrary times, i.e., the blow-up is delayed to infinity compared to the well-studied case of gradient sensing. Moreover, under slightly higher integrability of the initial value we obtain uniqueness of the solution and some dimension-dependent regularity results.

The results of this paper indicate the interest of a more detailed investigation of processes with local sensing, respectively the corresponding gradient structure of the form
\begin{equation}
\partial_t  u = \Delta e^{\partial_u{\cal E}(u)} ,
\end{equation}
which also appears for different energies, see, e.g., \cite{gao2019gradient} for the Dirichlet energy. Note that such equations possess a formal entropy dissipation of the form
$$ \frac{d}{dt} {\cal E}(u) =  - \int_\Omega e^{\partial_u{\cal E}(u)} |\nabla \partial_u{\cal E}(u)|^2~dx, $$
as well as duality estimates for the $\Hdual(\Omega)$-norm, which may be crucial for their analysis.  

Concerning the link to the models with gradient sensing it might be relevant to study some intermediate cases. A natural interpolation is obtained when one starts from a microscopic model based on a jump process with rate combining local and gradient sensing. To illustrate this, let $V=V(x)$ be a given potential (or in this case a chemotactic signal) and consider for $\theta \in [0,1]$ the jump process on a one-dimensional grid ($h$ being the size of the grid) with the probability of jump from node $i$ to node $j$ given by
$$ k_{i,j} = \exp\left[ -\theta V(x_i) - (1-\theta) h (V(x_i)-V(x_j))\right], $$
if $|i-j|=1$, while $k_{i,j} = 0$ for $|i-j|>1$. Using a standard scaling of $k_{i,j}$ as the grid size tends to zero, we obtain the equation
$$ \partial_t u = \nabla\cdot \lk e^{-\theta V} (\nabla u - u\nabla V)\rk = \nabla\cdot \lk e^{-\theta V}u\nabla \log (u e^{-V})\rk = \nabla\cdot \lk e^{(1-\theta) V} \nabla (u e^{-V})\rk ,$$
which is the standard Fokker-Planck equation for $\theta =0$ and the equation for local sensing for $\theta =1$. For any $\theta \in [0,1]$ the system has the same entropy, but duality estimates only hold for $\theta =1$.

\section*{Acknowledgements}

This work has been supported by the DAAD-PROCOPE project {\em Nonlinear Cross-Diffusion Systems}. Part of this work was done while PhL and AT enjoyed the hospitality of the Department Mathematik, FAU Erlangen-N\"urnberg. The authors thank Antonio Esposito (FAU Erlangen-N\"urnberg) for useful suggestions on the manuscript.

\bibliographystyle{abbrv}
\bibliography{ChemotaxisBIB}


\end{document}